\setlist[enumerate,1]{label={(\roman*)}}
\setlist[enumerate,2]{label={(\alph*)}}
\theoremstyle{definition}
\newtheorem{assumption}[theorem]{Assumption}
\crefname{assumption}{Assumption}{Assumptions}
\crefname{remark}{Remark}{Remarks}
\crefname{example}{Example}{Examples}
\crefname{lemma}{Lemma}{Lemmas}
\crefname{theorem}{Theorem}{Theorems}
\crefname{proposition}{Proposition}{Propositions}
\crefname{corollary}{Corollary}{Corollaries}
\crefname{definition}{Definition}{Definitions}
\renewcommand{\epsilon}{\varepsilon}
\renewcommand{\phi}{\varphi}
\newcommand{\N}{\mathbb{N}}
\newcommand{\R}{{\mathbb{R}}}
\newcommand{\E}{\mathbb{E}}
\newcommand{\di}{\mathrm{d}}
\newcommand{\Borel}{\mathcal{B}}
\newcommand{\Normal}{\mathcal{N}}
\newcommand{\Laplace}{\mathcal{L}}
\newcommand{\umap}{\hat{u}_\mathrm{MAP}}
\newcommand{\Prob}[1]{\mathbb{P}\left[#1\right]}
\newcommand{\Exp}[1]{\mathbb{E}\left[#1\right]}
\newcommand{\X}[1]{{\mathcal{X}^{#1}}}
\DeclareMathOperator{\dom}{\mathcal{D}}
\DeclareMathOperator{\ran}{\mathcal{R}}
\DeclareMathOperator{\Tr}{tr}
\DeclareMathOperator*{\argmin}{arg\,min}
\DeclareMathOperator{\Var}{Var}
\DeclareMathOperator{\supp}{supp}
\newcommand{\abs}[1]{{\lvert#1\rvert}}
\newcommand{\bigabs}[1]{\left\lvert#1\right\rvert}
\newcommand{\norm}[1]{{\lVert#1\rVert}}
\newcommand{\bignorm}[1]{\left\lVert#1\right\rVert}
\newcommand{\scalprod}[1]{{(#1)}}
\newcommand{\bigscalprod}[1]{\left(#1\right)}
\newcommand{\dualpair}[1]{{\langle#1\rangle}}
\newcommand{\qm}[1]{``#1''} 
\title{Are minimizers of the Onsager--Machlup functional strong posterior modes?}
\shorttitle{Variational characterization of posterior modes}
\author{Remo Kretschmann\thanks{Institute of Mathematics, University of Würzburg (\email{remo.kretschmann@mathematik.uni-wuerzburg.de}).}}
\date{2023-07-03}
\begin{document}

\maketitle

\begin{abstract}
In this work we connect two notions: That of the nonparametric mode of a probability measure, defined by asymptotic small ball probabilities, and that of the Onsager--Machlup functional, a generalized density also defined via asymptotic small ball probabilities. We show that in a separable Hilbert space setting and under mild conditions on the likelihood, modes of a Bayesian posterior distribution based upon a Gaussian prior exist and agree with the minimizers of its Onsager--Machlup functional and thus also with weak posterior modes.
We apply this result to inverse problems and derive conditions on the forward mapping under which this variational characterization of posterior modes holds. Our results show rigorously that in the limit case of infinite-dimensional data corrupted by additive Gaussian or Laplacian noise, nonparametric maximum a posteriori estimation is equivalent to Tikhonov--Phillips regularization. In comparison with the work of Dashti, Law, Stuart, and Voss (2013), the assumptions on the likelihood are relaxed so that they cover in particular the important case of white Gaussian process noise.
We illustrate our results by applying them to a severely ill-posed linear problem with Laplacian noise, where we express the maximum a posteriori estimator analytically and study its rate of convergence in the small noise limit.
\end{abstract}

\section{Introduction}

Maximum a posteriori (MAP) estimates are a useful and widely used way to describe points of maximal probability under a Bayesian posterior distribution. Unlike the conditional mean, their computation does not require numerical integration with respect to the posterior distribution, which may be prohibitively expensive for certain problems. Instead, they can typically be found as the solution to an optimization problem, which in many situations allows their efficient computation.

MAP estimates refer to modes of a Bayesian posterior distribution. The modes of a probability measure on $\R^n$ with a continuous Lebesgue density are simply the maximizers of this density. Probability measures on infinite-dimensional spaces do not have a Lebesgue density, which makes it necessary to generalize the definition of a mode to cover this setting. A common way to do this is to consider a point as a mode if small balls around it have asymptotically maximal probability as made rigorous in \cref{def_mode}. A problem with this definition is that it is not immediately clear how to find or compute such points.
One way to generalize the notion of a (negative logarithmic) Lebesgue density is the Onsager--Machlup functional, which describes the asymptotic relative probability of small balls around points under the posterior distribution as made rigorous in \cref{def_omf}. For Bayesian posterior distributions, it typically has the form of a penalized negative log-likelihood, and since its minimizers describe points of maximal probability, they are natural candidates to be modes. The question under which conditions minimizers of the Onsager--Machlup functional coincide with MAP estimates in the context of nonparamatric Bayesian inference is, however, a matter of ongoing research.

A first attempt at answering this question in a separable Banach space setting and for Gaussian priors has been made in \cite{Dashti:2013}. However, the proof therein is incomplete. On the one hand, parts of the proof in general only hold for separable Hilbert spaces. On the other hand, there remain several gaps in the proof in the Hilbert space case. This will be discussed in more detail in \cref{sect_main_res}. In the author's PhD thesis \cite{Kretschmann:2019}, most of these gaps were closed. In \cite{klebanov2023}, a remaining issue was resolved, completing the proof in the Hilbert space case.

The coincidence of MAP estimators and minimizers of the Onsager--Machlup functional has been proven for Besov-$1$ priors \cite{agapiou2018}, and, recently, for diagonal Gaussian priors on $\ell^p$ \cite{klebanov2023}. The question whether this coincidence also holds in the general separable Banach space case has very recently been answered positively in \cite{Lambley:2023}.

Helin and Burger introduced the more general notion of a weak mode (or weak MAP estimate in their terminology) in \cite{helin2015}, which is closely related to the Onsager--Machlup functional. Ayanbayev, Klebanov, Lie, and Sullivan further generalized this notion in \cite{Ayanbayev_2021a}, see \cref{def_weak_mode}, and showed that, under certain conditions, weak modes can be characterized as minimizers of the Onsager--Machlup functional. This will be discussed in more detail in \cref{sect_modes_OMF}. Since any mode is also a weak mode, this immediately yields a variational characterization of the mode for unimodal distributions. Lie and Sullivan studied the coincidence of weak and strong modes in \cite{Lie_Sul_2018}, i.e., the question whether every weak mode is also a mode. They found that under a uniformity condition, this is indeed the case, but that there are also examples of weak modes that are not modes. Said uniformity condition is, however, rather restrictive, as it assumes that all balls around a weak mode with small enough radius already have maximal probabilty among balls of the same radius. This excludes for example skewed distributions, and its practical relevance is limited. Beyond this result, it not known under which conditions weak and strong modes coincide.
In \cite{Ayanbayev_2021a,Ayanbayev_2021b}, the convergence of weak modes is connected to the $\Gamma$-convergence of Onsager--Machlup functionals, covering Bayesian inverse problems with Gaussian and Besov priors and countable product measures on weighted $\ell^p$ spaces.

We are particularly interested in Bayesian inverse problems with additive noise. Here, the Onsager--Machlup functional has the form of a Tikhonov--Phillips functional, so that its minimizers can be understood as regularized solutions. Previous results regarding the coincidence of MAP estimates and minimizers of the Onsager--Machlup functional are insofar unsatisfactory as that they do not cover the important limit case of inverse problems with infinite-dimensional data corrupted by Gaussian process noise. 
Both in \cite{Dashti:2013} and \cite{klebanov2023}, the likelihood is required to be uniformly bounded from above. Likelihoods arising from inverse problems with infinite-dimensional data are, however, typically not bounded from above both in case of Gaussian and Laplacian noise, see \cref{ex:gauss_unbounded,ex:unbounded_phi}. In \cite{agapiou2018}, the scope was limited to inverse problems with finite data. In \cite{Kretschmann:2019}, the log-likelihood was only required to be Lipschitz continuous, thereby allowing for infinite-dimensional Laplacian noise, but not for infinite-dimensional Gaussian noise.

While in reality, all data is finite, it is of interest to consider the theoretical limit case of infinite-dimensional data, which can be considered as measuring with \qm{infinite resolution}. This has the advantage that both theoretical results and numerical methods do not depend on the resolution of the measurement and are ensured not to break down as the resolution increases, cf.~\cite{lassas2009}. This limit case is not to be confused with that of a finite-dimensional measurement (corresponding to a finite resolution) that is repeated infinitely many times.
The setting of white Gaussian process noise and infinite-dimensional data is widely used in statistical and Bayesian inverse problems, see, e.g., \cite{Lehtinen1989,lassas2009,ding2017,bur_hel_kek_2018,aga_mat_2018,Nickl2020Bernstein,giordano2020bernstein,kretschmann2022}.
In many applications, a Gaussian distribution of the noise arises either naturally or is justified as an approximation by the central limit theorem if the exact distribution of the noise is non-Gaussian or unknown and the number of measurements (with finite resolution) is high enough.

Impulsive noise such as salt-and-pepper noise or noise with random valued outliers arises in many applications in image and signal processing, e.g., image acquisition with faulty pixels in a sensor or faulty memory locations \cite{bov_2005}. Such impulsive noise functions take very large values on a small part of their domain, while taking small values or being indentical to zero elsewhere.
Tikhonov--Phillips regularization with an $L^1$-data fitting term is more robust toward outliers than $L^2$-data fitting and has been observed to provide better estimates for inverse problems with impulsive noise \cite{kaer_kun_maj_2005,cla_jin_kun_2010}. Laplacian countable product noise is of particular interest for a rigorous Bayesian interpretation of such regularization methods because it yields a negative log-likelihood that has the form of a weighted $\ell^1$-norm, as we will see in \cref{sec:laplacian_noise}.

In \cite{Cavalier_2008}, an overview over optimal convergence rates of the minimax risk in the small noise limit is given for mildly and severely ill-posed linear inverse problems under Sobolev-type and analytic source conditions.
In \cite{aga_mat_2018}, the convergence rate of the MAP estimator in the linear conjugate-Gaussian setting is studied in these cases, both with and without a data-dependent choice of the prior mean.
For nonlinear inverse problems with finite-dimensional data perturbed by Gaussian noise, almost sure weak convergence of the MAP estimator in the small noise limit has been established in section 4.2 of \cite{Dashti:2013}.

\subsection{Contributions}

In this work, we relax the assumptions on the likelihood in such a way that they allow for the case of Gaussian process noise. We assume that the negative log-likelihood is Lipschitz continuous on bounded sets and satisfies a lower cone condition, see \cref{ass:phi}. 
We show that under these assumptions, MAP estimates do in fact coincide with minimizers of the Onsager--Machlup functional for general Bayesian posterior distributions based upon Gaussian priors on a separable Hilbert space. This provides a mean to compute MAP estimates both analytically and numerically. Conversely, it guarantees that penalized maximum likelihood estimates indeed describe points of maximal posterior probability. Since weak MAP estimates are known to correspond to minimizers of the Onsager--Machlup functional in the considered setting, our result moreover shows that here, weak and strong MAP estimates coincide.

We then establish Lipschitz conditions on the forward mapping for nonlinear Bayesian inverse problems with infinite-dimensional data under which aforementioned assumptions on the likelihood are satisfied. We do this for two types of additive noise: Gaussian noise, which covers in particular the important case of white Gaussian noise, and Laplacian noise. Our results show rigorously that Bayesian MAP estimation is equivalent to Tikhonov--Phillips regularization in the considered cases.

Eventually, we apply our results to a class of severely ill-posed linear inverse problems that include, i.a., the inverse heat equation. Here, we express the MAP estimator explicitly and study its rate of convergence in case of Laplacian noise.

\subsection{Organization of this paper}

This work is structured as follows. 
In \cref{sect_var_char}, we introduce and discuss the definitions of strong modes, weak modes, and the Onsager--Machlup functional in terms of small ball probabilities, we state and discuss our assumptions on the likelihood, we formulate our main result --- a variational characterization of posterior modes in case of a Gaussian prior --- and carry out its proof.
In \cref{sect_ip}, we derive the likelihood for Bayesian inverse problems in case of infinite-dimensional additive Gaussian and Laplacian noise, and state conditions on the forward mapping under which a variational characterization of MAP estimates is possible.
In \cref{sect_lin_prob}, we consider a severely ill-posed linear problem, derive the posterior distribution and the MAP estimate in case of Laplacian noise, and study the rate of convergence of the MAP estimator in the small noise limit.
\Cref{sect_proofs_var_char,sect_proofs_ip,sect_proofs_lin_prob} contain technical proofs from \cref{sect_var_char,sect_ip,sect_lin_prob}.


\section{Variational characterization of maximum a posteriori estimates}
\label{sect_var_char}

The central result in this section is the coincidence of MAP estimates with minimizers of the Onsager--Machlup functional under mild assumptions on the likelihood.

\subsection{Modes and the Onsager--Machlup functional}
\label{sect_modes_OMF}

MAP estimates are widely used in Bayesian inverse problems because they can typically be found as the solution to an optimization problem and do not require integration with respect to the posterior distribution. Their statistical interpretation in a nonparametric setting may, however, not always be straightforward.
If a measure on $\R^n$ has a continuous Lebesgue density, then its modes are simply the maximizers of this density. 
Although every numerical computation of a mode is finite dimensional, it is desirable to have a well-defined notion of a mode in the infinite-dimensional limit case of which the mode in the finite-dimensional case can be considered an approximation.
For probability measures on infinite-dimensional spaces, which cannot have a Lebesgue density, modes are usually defined in terms of small ball probabilities.
Let us consider a probability measure $\mu$ on the Borel $\sigma$-algebra $\Borel(X)$ of a separable Banach space $X$.
The following definition has been introduced in \cite[Def.~3.1]{Dashti:2013}.

\begin{definition}
	\label{def_mode}
	A point $\hat{x} \in X$ is called a \emph{mode} (or \emph{strong mode}) of $\mu$, if it satisfies
	\begin{equation}
		\label{eq:def_mode}
		\lim_{\varepsilon \to 0} \frac{\mu(B_\varepsilon(\hat{x}))}{\sup_{x \in X} \mu(B_\varepsilon(x))} = 1,
	\end{equation}
	where $B_\varepsilon(x) \subset X$ denotes the open ball with radius $\varepsilon$ centred at $x \in X$.
\end{definition}

Weaker definitions of a mode have been introduced in \cite[Def.~4]{helin2015} and \cite[Def.~2.3]{Lie_Sul_2018}, and generalized in \cite[Def.~3.7]{Ayanbayev_2021a} as follows.

\begin{definition}
	\label{def_weak_mode}
	A point $\hat{x} \in \supp \mu \subseteq X$ is called a \emph{weak mode} of $\mu$, if it satisfies
	\[ \limsup_{\epsilon \to 0} \frac{\mu(B_\epsilon(x))}{\mu(B_\epsilon(\hat{x}))} \le 1 \quad \text{for all}~x \in X, \]
	where
	\[ \supp(\mu) := \left\{ x \in X: \mu(B_\epsilon(x)) > 0~\text{for all}~\epsilon > 0 \right\} \]
	denotes the topological support of $\mu$.	
\end{definition}

Both definitions can be generalized to topological vector spaces using bounded, open neighborhoods instead of norm balls, see \cite{Lie_Sul_2018}. Note that even simple probability measures on $\R$ with a Lebesgue density may fail to have a mode both in the strong and the weak sense, see Example 2.2 in \cite{CHKP_2018}.
Since the ratio in \eqref{eq:def_mode} is always smaller or equal to $1$, the mode property is equivalent to
\[ \limsup_{\epsilon \to 0} \frac{\sup_{x \in X} \mu(B_\epsilon(x))}{\mu(B_\epsilon(\hat{x}))} \le 1. \]
The difference between the two definitions is thus the order in which the supremum and the limit superior are formed. The probability of small balls around a weak mode asymptotically dominates the probability of balls of the same radius around any other fixed center point. In contrast, the probability of small balls around a strong mode is asymptotically equal to the maximal achievable probability of balls of the same radius. Here, the supremum is taken for every radius individually. If the supremum is attained, it may be attained by different center points for different radii.

By definition, every mode is also a weak mode, see also Lemma 3.9 in \cite{Ayanbayev_2021a}, but not every weak mode is a strong mode, see Example B.5 in \cite{Ayanbayev_2021a}. It is generally an open question if or under which conditions weak and strong modes coincide. It was shown in \cite[Thm.~2.6]{Lie_Sul_2018} that under the uniformity condition
\[ \mu(B_\epsilon(\hat{x})) = \sup_{x \in X} \mu(B_\epsilon(x)) \]
for all small enough $\epsilon$, a weak mode $\hat{x}$ is also a strong mode. This condition is, however, quite restrictive --- it excludes for example skewed probability measures such as a $\Gamma$-distribution with shape and rate parameters $\alpha = \beta = 2$ --- and is thus of limited use in practice.
As pointed out in \cite[Lem.~2.2]{Lambley:2023}, if the measure $\mu$ has a strong mode, then all its weak modes are also strong modes. This shifts the question of coincidence of weak and strong modes to the question of existence of a strong mode.

Another approach to describing points that in a certain sense maximize the probability under a measure is seeking minimizers of its Onsager--Machlup functional, which plays the role of a generalized negative log-density for measures without a Lebesgue density. In our context, it is defined as follows.
Let us again consider a probability measure $\mu$ on the Borel $\sigma$-algebra of a separable Banach space $X$.

\begin{definition}
	\label{def_omf}
	Let $E \subset \supp(\mu)$ denote the set of admissible shifts for $\mu$ that yield an equivalent measure, i.e., all $h \in X$ for which the shifted measure
	\begin{equation*}
		\mu_h := \mu(\cdot - h)
	\end{equation*}
	is equivalent with $\mu$.
	A functional $I$: $E \to \R$ is called an \emph{Onsager--Machlup functional} of $\mu$, if
	\begin{equation}
		\label{def_OM_funct}
		\lim_{\varepsilon \to 0} \frac{\mu(B_\varepsilon(h_1))}{\mu(B_\varepsilon(h_2))} = \exp\left(I(h_2) - I(h_1)\right) \quad\text{for all}~h_1,h_2 \in E.
	\end{equation}
\end{definition}

The Onsager--Machlup functional describes the asymptotic ratio of small ball probabilities around any two points in $E$. Although it is only unique up to the addition of a constant, we will speak of \emph{the} Onsager--Machlup functional since all representatives have the same extremal points. For points outside of $E$, the limit on the left hand side of \eqref{def_OM_funct} does not need to exist and can be infinite. As a first example, we consider the Onsager--Machlup functional of a Gaussian measure. Let $\ran(F)$ denote the range of a mapping $F$.

\begin{proposition}
	\label{OM_funct_gauss}
	Let $\Normal_Q$ be a nondegenerate centered Gaussian measure with covariance operator $Q$ on a separable Hilbert space $X$.
	Then the set of admissible shifts for $\Normal_Q$ is given by its Cameron--Martin space $E = \ran(Q^{1/2})$, and its Onsager--Machlup functional $I$: $E \to \R$ by
	\[ I(x) = \frac12\norm{x}_Q^2 \quad \text{for all}~x \in E, \]
	where $\norm{\cdot}_Q := \norm{Q^{-1/2}\cdot}_X$ denotes the Cameron--Martin norm of $\Normal_Q$.
\end{proposition}
\begin{proof}
By the Cameron--Martin theorem \cite[Thm.~2.8]{DaPrato:2006}, the space $E$ of admissible shifts for the measure $\Normal_Q$ is given by $\ran(Q^{1/2})$.
It follows from Proposition 3 in Section 18 of \cite{Lifshits:1995} that
\begin{align*}
	\lim_{r \to 0} \frac{\Normal_Q(B_r(h_1))}{\Normal_Q(B_r(h_2))}
	&= \lim_{r \to 0} \frac{\Normal_Q(rB_1(0))}{\Normal_Q(h_2 + rB_1(0))} \cdot \lim_{r \to 0} \frac{\Normal_Q(h_1 + rB_1(0))}{\Normal_Q(rB_1(0))} \\
	&= \exp \left(\frac12\big\|Q^{-\frac12}h_2\big\|_X^2\right) \exp \left(-\frac12\big\|Q^{-\frac12}h_1\big\|_X^2\right)
\end{align*}
for all $h_1,h_2 \in E$.
\end{proof}

In nonparametric Bayesian inference, it is not immediately clear when strong modes of the posterior distribution coincide with the minimizers of its Onsager--Machlup functional.
It has been proven in \cite[Prop.~4.1]{Ayanbayev_2021a} that weak modes agree with the minimizers of the Onsager--Machlup functional if $\mu$ and $E$ have the so-called $M$-property, which holds if there exists an $x_0 \in E$ such that
\[ \lim_{\epsilon \to 0} \frac{\mu(B_\epsilon(x))}{\mu(B_\epsilon(x_0))} = 0 \quad \text{for all}~x \in X \setminus E. \]
Here, $E$ can be an arbitrary subset of $X$.
It has been shown in \cite[Lem.~4.5]{klebanov2023} that Gaussian measures on $\ell^p$, $1 \le p < \infty$, with diagonal covariance have the $M$-property in combination with their Cameron--Martin space. 
Very recently, the $M$-property has been established for centered nondegenerate Gaussian measures on separable Banach spaces in combination with their Cameron--Martin space, see \cite[Cor.~3.3]{Lambley:2023}.

\subsection{Bayesian set-up}

Now, we consider a Bayesian posterior distribution $\mu^y$ on the Borel $\sigma$-algebra $\Borel(X)$ of a separable Hilbert space $X$ whose density with respect to a Gaussian prior distribution is given by Bayes' formula.
The data $y$ may be an element of another separable Hilbert space $Y$ but is considered as fixed throughout this section, i.e., we consider the posterior distribution $\mu^y$ inferred from one specific realization of the data.

\begin{assumption}
	\label{ass:post_dist}
	\begin{enumerate}
		\item \label{ass:prior_gauss} The parameter $u$ has a nondegenerate centered Gaussian prior distribution $\mu_0 := \Normal_\Sigma$ on $X$, i.e., the covariance operator $\Sigma$ is injective.
		\item \label{ass:bayes_formula} The posterior distribution $\mu^y$ is absolutely continuous with respect to $\mu_0$, and there exist a measurable function $\Phi$: $X \to \R$ and $Z > 0$ such that
		\begin{equation}
			\label{post_dist}
			\frac{\di\mu^y}{\di\mu_0}(u) = \frac{\exp(-\Phi(u))}{Z}.
		\end{equation}
	\end{enumerate}
\end{assumption}

Note that the covariance operator of any Borel probability measure on a separable Hilbert space is self-adjoint, positive, and trace class, see \cite[Prop.~1.8]{DaPrato:2006}.
\Cref{ass:post_dist} (ii) is a standard assumption in Bayesian inverse problems, see, e.g., section 2.4 and Theorem 6.29 in \cite{stuart_2010} or Theorem 14 in \cite{das_stu_2017}.
We make the following assumptions on the negative log-likelihood $\Phi$.

\begin{assumption}
	\label{ass:phi}
	The function $\Phi$: $X \to \R$ satisfies the following two conditions.
	\begin{enumerate}
		\item \label{ass:phi_loc_lip} $\Phi$ is \emph{Lipschitz continuous on bounded sets}, i.e., for every $r > 0$, there exists $L = L(r) > 0$ such that
		\[ \abs{\Phi(x_1) - \Phi(x_2)} \le L \norm{x_1 - x_2}_X \quad \text{for all}~x_1, x_2 \in B_r(0). \]
		\item \label{ass:phi_lower_bound} There exists $\underline{L} \ge 0$ such that $\Phi$ satisfies the \emph{lower cone condition}
		\[ \Phi(x) \ge \Phi(0) - \underline{L}\norm{x}_X \quad \text{for all}~x \in X. \]
	\end{enumerate}
\end{assumption}

\Cref{ass:phi} is, in particular, satisfied if $\Phi$ is Lipschitz continuous, i.e., if there exists $L > 0$ such that
\[ \abs{\Phi(x_1) - \Phi(x_2)} \le L\norm{x_1 - x_2}_X \quad \text{for all}~x_1, x_2 \in X. \]
A Lipschitz continuous log-likelihood arises for example in inverse problems with Laplacian noise, see \cref{sec:laplacian_noise}.
We will moreover see in \cref{sec:gaussian_noise} that \cref{ass:phi} covers the important case of infinite-dimensional Gaussian noise because the lower cone condition in $0$ allows for positive quadratic growth of $\Phi$. This case is neither covered by the assumptions used in \cite{Dashti:2013} and \cite{klebanov2023}, which include a uniform lower bound on $\Phi$, see \cref{ex:gauss_unbounded}, nor by those used in \cite{Kretschmann:2019}, which demand global Lipschitz continuity.

The Onsager--Machlup functional of $\mu^y$ is now given by the sum of the Onsager--Machlup functional of the prior distribution and the negative log-likelihood $\Phi$. Moreover, it possesses a minimizer.
\begin{proposition}
	\label{form_OM_funct}
	Let \cref{ass:post_dist} hold.
	If $\Phi$ satisfies \cref{ass:phi}, then the set of admissible shifts of $\mu^y$ is given by $E = \ran(\Sigma^{1/2})$ and its Onsager--Machlup functional $I$: $E \to \R$ has the form
	\begin{equation}
		\label{def_I}
		I(x) = \Phi(x) + \frac12\norm{x}_\Sigma^2 \quad\text{for all}~x \in E.
	\end{equation}
\end{proposition}

\begin{proposition}
	\label{min_I_ex}
	If $\Phi$ satisfies \cref{ass:phi}, then $I$, as given by \eqref{def_I}, has a minimizer in $E$.
\end{proposition}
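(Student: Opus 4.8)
The plan is to apply the direct method of the calculus of variations in the Cameron--Martin space $E = \ran(Q^{1/2})$, equipped with the inner product $\langle x_1, x_2\rangle_E := \langle Q^{-1/2}x_1, Q^{-1/2}x_2\rangle_X$. With this inner product $E$ is a separable Hilbert space, and the inclusion $\iota\colon E \to X$ is compact: since $Q$ is positive definite, the map $z \mapsto Q^{1/2}z$ is an isometric isomorphism from $X$ onto $E$, so $\iota$ is unitarily equivalent to $Q^{1/2}\colon X \to X$, which is Hilbert--Schmidt, hence compact, because $Q$ is trace class. In particular $\norm{x}_X \le \norm{Q^{1/2}}_{\mathcal{L}(X)}\norm{x}_E$ for all $x \in E$.

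First I would check that $I$ is bounded from below and coercive on $E$. Combining the lower cone condition in \cref{ass:phi} with this continuous embedding gives, for every $x \in E$,
\[ I(x) = \Phi(x) + \tfrac12\norm{x}_E^2 \ge \Phi(0) - \underline{L}\,\norm{Q^{1/2}}_{\mathcal{L}(X)}\,\norm{x}_E + \tfrac12\norm{x}_E^2, \]
and the right-hand side tends to $+\infty$ as $\norm{x}_E \to \infty$; in particular $m := \inf_{x \in E} I(x) \in \R$.

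Next I would take a minimizing sequence $(x_n)_{n \in \N} \subset E$ with $I(x_n) \to m$. By coercivity $(x_n)$ is bounded in $E$, so after passing to a subsequence (not relabelled) we have $x_n \rightharpoonup \hat{x}$ weakly in $E$ for some $\hat{x} \in E$, and by compactness of $\iota$ also $x_n \to \hat{x}$ strongly in $X$. Since $(x_n)$ is bounded in $X$ and $\Phi$ is Lipschitz continuous on bounded sets by \cref{ass:phi}, it follows that $\Phi(x_n) \to \Phi(\hat{x})$; and since $x \mapsto \norm{x}_E^2$ is convex and continuous, hence weakly sequentially lower semicontinuous on $E$, we get $\norm{\hat{x}}_E^2 \le \liminf_{n}\norm{x_n}_E^2$. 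Therefore
\[ I(\hat{x}) = \Phi(\hat{x}) + \tfrac12\norm{\hat{x}}_E^2 \le \lim_{n}\Phi(x_n) + \tfrac12\liminf_{n}\norm{x_n}_E^2 = \liminf_{n} I(x_n) = m, \]
so $\hat{x}$ is a minimizer of $I$ in $E$.

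The only mildly delicate point is the compactness of the embedding $E \hookrightarrow X$, which transfers weak convergence in $E$ to strong convergence in $X$ and thereby lets the locally Lipschitz (hence locally continuous) functional $\Phi$ pass to the limit; this is, however, a classical property of the Cameron--Martin space of a trace-class Gaussian measure, and the remaining coercivity and weak lower semicontinuity steps are routine.
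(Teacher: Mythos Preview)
Your proof is correct and self-contained. The paper takes a different route: it invokes Theorem~5.4 of Stuart (2010), which guarantees a minimizer of $I$ once $\Phi$ satisfies (a) the quadratic lower bound $\Phi(x) \ge M(\epsilon) - \epsilon\norm{x}_X^2$ for every $\epsilon>0$, and (b) local Lipschitz continuity; it then verifies (a) and (b) from \cref{ass:phi} exactly as in the proof of \cref{form_OM_funct}. So the paper outsources the direct-method argument to a reference and only checks hypotheses, whereas you carry out the direct method explicitly, using the lower cone condition itself (rather than the derived quadratic bound) for coercivity, and the compactness of the Cameron--Martin embedding $E\hookrightarrow X$ to upgrade weak $E$-convergence to strong $X$-convergence so that the locally Lipschitz $\Phi$ passes to the limit. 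Your argument is more transparent and avoids the external citation; the paper's version is shorter but hides precisely the compactness and weak lower semicontinuity ingredients you spell out.
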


The proofs of these two propositions can be found in \cref{sect_proofs_var_char}.

\subsection{Main result}
\label{sect_main_res}

The main result of this work is the following.
\begin{theorem} 
	\label{var_char_modes}
	Under \cref{ass:post_dist,ass:phi}, the following holds true.
	\begin{enumerate}
		\item The posterior distribution $\mu^y$ has a mode.
		\item A point $x \in X$ is a mode of $\mu^y$ if and only if it minimizes the Onsager--Machlup functional of $\mu^y$.
	\end{enumerate}
\end{theorem}

Note that any minimizer of the Onsager--Machlup functional lies in $E$ by definition.
\Cref{var_char_modes} gives a positive answer to the aforementioned question in case of a Gaussian prior distribution on a separable Hilbert space: Under mild conditions on the likelihood, nonparametric posterior modes do indeed agree with minimizers of the Onsager--Machlup functional. This opens up the possibility of computing posterior modes explicitly by solving a canonical optimization problem and gives a statistical interpretation of the chosen objective functional.

In \cref{form_OM_funct}, we have seen that the Onsager--Machlup functional of $\mu^y$ has the form of a Tikhonov--Phillips functional with discrepancy term $\Phi(x)$ and quadratic penalty term $\frac12\norm{x}_\Sigma^2$, which allows a rigorous interpretation of nonparametric MAP estimation as Tikhonov--Phillips regularization and vice versa. 

Due to the correspondence of weak posterior modes to minimizers of the Onsager--Machlup functional described above, we moreover obtain equivalence of weak and strong posterior modes under the assumptions of \cref{var_char_modes}.

\begin{corollary}
	\label{weak_strong_modes}
	Let \cref{ass:post_dist,ass:phi} hold.
	Then $x \in X$ is a mode of $\mu^y$ if and only if it is a weak mode of $\mu^y$.
\end{corollary}

\subsection{Proof of the main result}

In \cite{Dashti:2013}, the coincidence of posterior modes with minimizers of the Onsager--Machlup functional is stated in a separable Banach space setting as Theorem 3.5 and Corollary 3.10. However, the proof given in \cite{Dashti:2013} is incomplete. On the one hand, parts of the proof only hold for separable Hilbert spaces, as pointed out in section 1.1 of \cite{klebanov2023}. On the other hand, even in the Hilbert space case the proof contains gaps that are closed in this work. Our proof follows the fundamental approach of \cite{Dashti:2013}, incorporating corrections where necessary. Most of these corrections have been introduced in \cite{Kretschmann:2019}, while some were just recently found necessary in \cite{klebanov2023}.
The outline of the proof of \cref{var_char_modes} can be described as follows.
\begin{enumerate}
	\item \label{amf} For every $\epsilon > 0$, choose $x_\epsilon \in X$ such that
	\begin{equation}
		\label{def:amf}
		\mu^y(B_\epsilon(x_\epsilon)) \ge (1 - \epsilon) \sup_{x \in X} \mu^y(B_\epsilon(x)).
	\end{equation}
	\item \label{conv_subseq_E} Show as follows that for every positive null sequence $\{\epsilon_n\}_{n\in\N}$, the sequence $\{x_{\epsilon_n}\}_{n\in\N}$ contains a subsequence that converges towards some $\bar{x} \in E$.
	\begin{enumerate}
		\item Show that $\{x_{\epsilon_n}\}_{n\in\N}$ is bounded and thus has a weakly convergent subsequence with limit $\bar{x} \in X$.
		\item Prove that $\bar{x} \in E$.
		\item Conclude that the subsequence converges in fact strongly toward $\bar{x}$.
	\end{enumerate}
	\item \label{cp_min_mode} Show that every cluster point of the sequence $\{x_{\epsilon_n}\}_{n\in\N}$ is both a mode of $\mu^y$ and a minimizer of the Onsager--Machlup functional of $\mu^y$.
	\item Use the existence of a point with these properties to prove that every mode of $\mu^y$ is a minimizer of the Onsager--Machlup functional of $\mu^y$ and vice versa.
\end{enumerate}
Points $x_\epsilon$ with property \eqref{def:amf} are called an \emph{asymptotic maximizing family}. Parts \ref{amf} to \ref{cp_min_mode} are stated separately as \cref{thmConvSub}.
The most notable corrections in comparison with \cite{Dashti:2013} are the following.
\begin{itemize}
	\item[(C1)] In step (ii) (c), it is used that the Gaussian prior $\mu_0$ satisfies
	\[ \liminf_{n \to \infty} \frac{\mu_0(B_{\epsilon_n}(x_n))}{\mu_0(B_{\epsilon_n}(0))} = 0 \]
	for any sequence $(x_n)_{n\in\N}$ in $X$ that converges weakly but not strongly towards some $\bar{x} \in E$ and any positive null sequence $(\epsilon_n)_{n\in\N}$. In \cite{Dashti:2013}, this statement was only proven for the special case $\bar{x} = 0$, see \cite[Lemma 3.9]{Dashti:2013}. The required statement was proven as Lemma 4.13 in \cite{Kretschmann:2019} and its proof was later simplified in Corollary 3.7 of \cite{klebanov2023}.
	\item[(C2)] In step (iii), it is used without proof that the Gaussian prior $\mu_0$ satisfies
	\[ \limsup_{n \to \infty} \frac{\mu_0(B_{\epsilon_n}(x_n))}{\mu_0(B_{\epsilon_n}(\bar{x}))} \le 1 \]
	for any sequence $(x_n)_{n\in\N}$ in $X$ that converges strongly towards some $\bar{x} \in E$ and any positive null sequence $(\epsilon_n)_{n\in\N}$. This statement was later proven by Masoumeh Dashti (personal communication, 3 July 2017), see Lemma 4.14 in \cite{Kretschmann:2019}.
	\item[(C3)] In step (iii), it is only proved that
	\[ \lim_{n\to\infty} \frac{\mu^y(B_{\epsilon_n^\prime}(\bar{x}))}{\sup_{x \in X} \mu^y(B_{\epsilon_n^\prime}(x))} = 1 \]
	for a subsequence $\{\epsilon_n^\prime\}_{n\in\N}$ of $\{\epsilon_n\}_{n\in\N}$. This is not immediately obvious because it is hidden by the notation $\epsilon \to 0$. In order for $\bar{x}$ to be a mode, the limit needs to be $1$ for any positive sequence $\{\epsilon_n\}_{n\in\N}$ with $\epsilon_n \to 0$.
	\item[(C4)] In the proof of Theorem 3.5 in \cite{Dashti:2013}, a family $\{\tilde{x}_r\}_{r > 0}$ of maximizers of $x \mapsto \mu^y(B_r(x))$ is used instead of the family $\{x_\epsilon\}_{\epsilon > 0}$ defined in \eqref{def:amf}. However, as discussed in \cite{klebanov2023}, these maximizers --- also called \emph{radius-$r$ modes} --- do not necessarily exist. Their existence in our setting has been proven in Corollary A.9 of \cite{lambley2022}. Nonetheless, we adopt the more general approach of Klebanov and Wacker used in \cite{klebanov2023} to resolve this issue and work with an asymptotic maximizing family, which always exists by definition of the supremum.
\end{itemize}
In Theorems 2.4 and 2.5 of \cite{klebanov2023}, the coincidence of MAP estimates with minimizers of the Onsager--Machlup functional is proven for posterior distributions based upon diagonal Gaussian priors on $\ell^p$, $1 \le p < \infty$, under the same (stricter) assumptions on the likelihood as in \cite{Dashti:2013}. The proof of these results follows the fundamental structure of that in \cite{Dashti:2013,Kretschmann:2019}. The main differences lie in the use of an asymptotic maximizing family instead of radius-$r$ modes and in simplifying the proofs of several lemmas regarding the small ball probabilities of Gaussian measures in the separable Hilbert space case $p = 2$.
The proof of our main result, \cref{var_char_modes}, builds on the following auxiliary result.
\begin{theorem}
	\label{thmConvSub}
	For every $\varepsilon > 0$, let $x_\epsilon \in X$ satisfy
	\begin{equation*}
		\mu^y(B_\varepsilon(x_\varepsilon)) \ge \left(1 - \epsilon\right) \sup_{x \in X} \mu^y(B_\varepsilon(x)).
	\end{equation*}
	If $\Phi$ satisfies \cref{ass:phi}, then the following holds true for every positive sequence $\{\varepsilon_n\}_{n\in\N}$ with $\varepsilon_n \to 0$:
	\begin{enumerate}
		\item \label{conv_subseq} The sequence $\{x_{\varepsilon_n}\}_{n\in\N}$ contains a subsequence that converges strongly in $X$, and every weak cluster point $\bar{x}$ of $\{x_{\epsilon_n}\}_{n\in\N}$ satisfies $\bar{x} \in E$.
		\item \label{cluster_point_min} Every cluster point $\bar{x} \in E$ of $\{x_{\varepsilon_n}\}_{n\in\N}$ minimizes the Onsager--Machlup functional of $\mu^y$, and every subsequence $\{x_{\epsilon_{n_m}}\}_{m\in\N}$ converging toward $\bar{x}$ satisfies
		\begin{equation*}
			\lim_{m\to\infty} \frac{\mu^y(B_{\varepsilon_{n_m}}(\bar{x}))}{\mu^y(B_{\varepsilon_{n_m}}(x_{\varepsilon_{n_m}}))} = 1.
		\end{equation*}
		\item \label{cluster_point_map} Every cluster point $\bar{x} \in E$ of $\{x_{\varepsilon_n}\}_{n\in\N}$ is a mode of $\mu^y$.
	\end{enumerate}
\end{theorem}

\begin{remark}
	\Cref{thmConvSub} guarantees in particular the existence of a mode.
\end{remark}

\begin{proof}[Proof of \cref{thmConvSub}]
First of all, we note that for each $\epsilon > 0$, an $x_\epsilon \in X$ with $\mu^y(B_\epsilon(x_\epsilon)) \ge (1 - \epsilon) \sup_{x \in X} \mu^y(B_\epsilon(x))$ exists by definition of the supremum, and the supremum is finite because $\mu^y$ is a probability measure.
Without loss of generality, we may assume that $\Phi(0) = 0$, because adding a constant to $\Phi$ can be absorbed by the normalization constant $Z$ without changing the measure $\mu^y$.

Ad \ref{conv_subseq}: Let $\{\epsilon_n\}_{n\in\N}$ be a positive null sequence and assume w.l.o.g. that $\epsilon_n \le 1$ for all $n \in \N$. We first show that $\{x_{\epsilon_n}\}_{n\in\N}$ is bounded in $X$.
By \cref{ass:phi} \ref{ass:phi_loc_lip}, we have
	\[ \Phi(u) = \Phi(u) - \Phi(0) \le L(1) \norm{u}_X \]
for all $u \in B_1(0)$. From this we obtain, using Anderson's inequality, that
\begin{align*} 
	\mu^y(B_\varepsilon(x_\epsilon))
	&\ge \left(1 - \epsilon\right) \sup_{u \in X} \int_{B_\varepsilon(u)} \mu^y(dv)
	= \left(1 - \epsilon\right) \sup_{u \in X} \int_{B_\varepsilon(u)} \frac{1}{Z} e^{-\Phi(v)} \mu_0(dv) \nonumber\\
	&\ge \frac{1 - \epsilon}{Z} \int_{B_\varepsilon(0)} e^{-\Phi(v)} \mu_0(dv)
	\ge \frac{1 - \epsilon}{Z} \int_{B_\varepsilon(0)} e^{-L(1)\norm{v}_X} \mu_0(dv) \nonumber\\
	&\ge \frac{1 - \epsilon}{Z} e^{-L(1)\varepsilon} \mu_0(B_\varepsilon(0))
\end{align*}
with $Z = \int_X \exp(-\Phi(v)) \mu_0(\mathrm{d}v)$ for all $\epsilon \in (0,1]$.
On the other hand, we have
	\[ \Phi(u) = \Phi(u) - \Phi(0) \ge \underline{L} \norm{u}_X \]
for all $u \in X$ by \cref{ass:phi} \ref{ass:phi_lower_bound}, which yields
\begin{equation*} 
	\mu^y(B_\varepsilon(u)) 
	= \int_{B_\varepsilon(u)} \frac{1}{Z} e^{-\Phi(v)} \mu_0(dv)
	\le \frac{1}{Z} \int_{B_\varepsilon(u)} e^{\underline{L}\norm{v}_X} \mu_0(dv)
	\le \frac{1}{Z} e^{\underline{L}\left(\norm{u}_X + \varepsilon\right)} \mu_0(B_\varepsilon(u))
\end{equation*}
holds for all $u \in X$.
Together, we obtain
\begin{equation}
	\label{eq:gauss_est1}
	\mu_0(B_\varepsilon(x_\epsilon)) 
	\ge Z e^{-\underline{L}\left(\left\|x_\epsilon\right\|_X + \varepsilon\right)} \mu^y(B_\varepsilon(x_\epsilon))
	\ge \left(1 - \epsilon\right) e^{-\underline{L}\left\|x_\epsilon\right\|_X - \left(\underline{L} + L(1)\right)\varepsilon} \mu_0(B_\varepsilon(0))
\end{equation}
for all $\varepsilon \in (0,1]$.
However, \cite[Lem.~3.6]{Dashti:2013} proves that there exists $a_1 > 0$ such that
\begin{equation}
	\label{eq:gauss_est2}	
	\frac{\mu_0(B_\varepsilon(x_\epsilon))}{\mu_0(B_\varepsilon(0))} \le e^{-\frac{a_1}{2}\left(\left\|x_\epsilon\right\|_X^2 - 2\varepsilon\right)}
\end{equation}
for all $\varepsilon > 0$.
Assuming that $\{x_{\epsilon_n}\}_{n\in\N}$ is unbounded, i.e., that there is a subsequence, again denoted by $\{x_{\epsilon_n}\}_{n\in\N}$, with $\norm{x_{\epsilon_n}}_X \to \infty$ as $n \to \infty$, leads to a contradiction, because in this case,
\begin{align*}
	&\frac{a_1}{2}\left(\norm{x_{\epsilon_n}}^2 - 2\varepsilon_n\right) - \underline{L}\norm{x_{\epsilon_n}} - \left(\underline{L} + L(1)\right)\varepsilon_n + \ln(1 - \epsilon_n) \\
	&= \left(\frac{a_1}{2}\norm{x_{\epsilon_n}} - \underline{L}\right)\norm{x_{\epsilon_n}} - \left(a_1 + \underline{L} + L(1)\right)\varepsilon_n + \ln(1 - \epsilon_n)\to \infty
\end{align*}
as $n \to \infty$, which implied
	\[ \frac{\mu_0(B_\varepsilon(x_\epsilon))}{\mu_0(B_\varepsilon(0))} 
	\ge \left(1 - \epsilon_n\right) e^{-\underline{L}\left\|x_{\epsilon_n}\right\|_X - \left(\underline{L} + L(1)\right)\varepsilon_n} 
	> e^{-\frac{a_1}{2}\left(\left\|x_{\epsilon_n}\right\|_X^2 - 2\varepsilon_n\right)} 
	\ge \frac{\mu_0(B_\varepsilon(x_\epsilon))}{\mu_0(B_\varepsilon(0))} \]
for sufficiently large $n$ by \eqref{eq:gauss_est1} and \eqref{eq:gauss_est2}.
So $\{x_{\epsilon_n}\}_{n\in\N}$ is bounded.
By the reflexivity of $X$, it therefore contains a subsequence $\{x_{\epsilon_n}\}_{n \in N_1}$ that converges weakly toward some $\bar{x} \in X$.

Now, we show that $\tilde{x} \in E$ for any weak cluster point $\tilde{x} \in X$ of $\{x_{\epsilon_n}\}_{n\in\N}$.
We can choose $R > 0$ such that $x_{\epsilon_n} \in B_R(0)$ for all $n \in \N$. Let $L$ denote the Lipschitz constant of $\Phi$ on $B_{R + 1}(0)$.
By the choice of $x_\epsilon$ and the boundedness of $\{x_{\epsilon_n}\}_{n \in \N}$, we have
\begin{equation}
\label{eq:est_ratio_mu0}
\begin{split}
	\frac23 &\le 1 - \epsilon \le \frac{\mu^y(B_\varepsilon(x_\epsilon))}{\mu^y(B_\varepsilon(0))}
	= \frac{\int_{B_\varepsilon(x_\epsilon)} e^{-\Phi(v)} \mu_0(\di v)}{\int_{B_\varepsilon(0)} e^{-\Phi(v)} \mu_0(\di v)}
	\le \frac{e^{L\left(\left\|x_\epsilon\right\|_X + \varepsilon\right)}}{e^{-L\varepsilon}} \frac{\mu_0(B_\varepsilon(x_\epsilon))}{\mu_0(B_\varepsilon(0))} \\
	&= e^{L\left(\left\|x_\epsilon\right\|_X + 2\varepsilon\right)} \frac{\mu_0(B_\varepsilon(x_\epsilon))}{\mu_0(B_\varepsilon(0))}
	\le e^{L\left(R + 2\right)} \frac{\mu_0(B_\varepsilon(x_\epsilon))}{\mu_0(B_\varepsilon(0))}
\end{split}
\end{equation}
for all $0 < \varepsilon \le \frac13$.
Let $\{x_{\epsilon_n}\}_{n \in N_2}$ denote the subsequence that converges weakly toward $\tilde{x}$.
If we assume that $\tilde{x} \notin E$, then \cite[Lem.~3.7]{Dashti:2013} tells us that there exists $n \in N_2$ such that $\mu_0(B_{\varepsilon_n}(x_{\epsilon_n}))/\mu_0(B_{\varepsilon_n}(0)) \le \frac13 e^{-L\left(R + 2\right)}$, and consequently
	\[ \frac{\mu^y(B_{\varepsilon_n}(x_{\epsilon_n}))}{\mu^y(B_{\varepsilon_n}(0))} \le \frac13, \]
which poses a contradiction. So $\tilde{x} \in E$ and, consequently, also $\bar{x} \in E$.

Next, we show that $\{x_{\epsilon_n}\}_{n \in N_1}$ converges strongly toward $\bar{x} \in E$. Suppose it does not.
Then Lemma 4.13 in \cite{Kretschmann:2019} (or Corollary 3.7 in \cite{klebanov2023}) applies and yields the existence of an $n \in N_1$ such that
	\[ \frac{\mu_0(B_{\varepsilon_n}(x_{\epsilon_n}))}{\mu_0(B_{\varepsilon_n}(0))} \le \frac13 e^{-L(R + 2)}, \]
which contradicts \eqref{eq:est_ratio_mu0}.
So the subsequence $\{x_{\epsilon_n}\}_{n \in N_1}$ does indeed converge strongly in $X$ toward $\bar{x} \in E$.

Ad \ref{cluster_point_min}: Let $\{x_{\epsilon_n}\}_{n\in\N}$ denote a subsequence that converges toward the cluster point $\bar{x} \in E$.
First, we show that
	\[ \lim_{n\to\infty} \frac{\mu^y(B_{\varepsilon_n}(\bar{x}))}{\mu^y(B_{\varepsilon_n}(x_{\epsilon_n}))} = 1. \]
By definition of $x_\epsilon$ and the Lipschitz continuity of $\Phi$ on bounded sets we have
\begin{equation*}
	1 - \epsilon \le \frac{\mu^y(B_\varepsilon(x_\epsilon))}{\mu^y(B_\varepsilon(\bar{x}))}
	= e^{\Phi(\bar{x}) - \Phi(x_\epsilon)} \frac{\int_{B_\varepsilon(x_\epsilon)} e^{\Phi(x_\epsilon)-\Phi(v)} \mu_0(\di v)}{\int_{B_\varepsilon(\bar{x})} e^{\Phi(\bar{x})-\Phi(v)} \mu_0(\di v)}
	\le e^{L \norm{x_\epsilon - \bar{x}}_X} e^{2L\varepsilon} \frac{\mu_0(B_{\varepsilon}(x_\epsilon))}{\mu_0(B_{\varepsilon}(\bar{x}))},
\end{equation*}
for all $\varepsilon > 0$ and consequently, by the convergence $x_{\epsilon_n} \to \bar{x}$ and Lemma 4.14 in \cite{Kretschmann:2019} (or Lemma A.2 in \cite{klebanov2023}),
\begin{equation*}
	1 \le \liminf_{n\to\infty} \frac{\mu^y(B_{\varepsilon_n}(x_{\epsilon_n}))}{\mu^y(B_{\varepsilon_n}(\bar{x}))}
	\le \limsup_{n\to\infty} \frac{\mu^y(B_{\varepsilon_n}(x_{\epsilon_n}))}{\mu^y(B_{\varepsilon_n}(\bar{x}))}
	\le \limsup_{n\to\infty} \frac{\mu_0(B_{\varepsilon_n}(x_{\epsilon_n}))}{\mu_0(B_{\varepsilon_n}(\bar{x}))} \le 1.
\end{equation*}

Next, we show that $\bar{x}$ minimizes the Onsager--Machlup functional $I$ of $\mu^y$. By \cref{min_I_ex}, a minimizer $x^* \in E$ of $I$ exists. If we suppose that $\bar{x}$ was not a minimizer of $I$, then $I(\bar{x}) - I(x^*) > 0$, and thus
\begin{align*}
	1 &= \lim_{n \to \infty} (1 - \epsilon_n) \le \lim_{n \to \infty} \frac{\mu^y(B_{\varepsilon_n}(x_{\epsilon_n}))}{\mu^y(B_{\varepsilon_n}(x^*))} 
	= \lim_{n \to \infty} \frac{\mu^y(B_{\varepsilon_n}(x_{\epsilon_n}))}{\mu^y(B_{\varepsilon_n}(\bar{x}))} \lim_{n \to \infty} \frac{\mu^y(B_{\varepsilon_n}(\bar{x}))}{\mu^y(B_{\varepsilon_n}(x^*))} \\
	&= 1\exp(I(x^*) - I(\bar{x})) < 1,
\end{align*}
by the choice of $x_\epsilon$ and \cref{form_OM_funct}, which poses a contradiction.

Ad \ref{cluster_point_map}: It remains to show that $\bar{x}$ is a mode of $\mu^y$, i.e., that for every positive sequence $\{\delta_n\}_{n\in\N}$ with $\delta_n \to 0$, we have
\begin{equation}
	\label{eq:map_prop_ubar}
	\lim_{n\to\infty} \frac{\mu^y(B_{\delta_n}(\bar{x}))}{\mu^y(B_{\delta_n}(x_{\delta_n}))} = 1.
\end{equation}
To this end, we choose an arbitrary subsequence of $\{\delta_n\}_{n\in\N}$, again denoted by $\{\delta_n\}_{n\in\N}$.
Then, by \ref{conv_subseq}, there exists a subsubsequence, again denoted by $\{\delta_n\}_{n\in\N}$, such that $x_{\delta_n} \to \tilde{x}$ for some $\tilde{x} \in E$. Moreover, by \ref{cluster_point_min}, the limit $\tilde{x}$ minimizes $I$ and satisfies
	\[ \lim_{n\to\infty} \frac{\mu^y(B_{\delta_n}(\tilde{x}))}{\mu^y(B_{\delta_n}(x_{\delta_n}))} = 1. \]
Since $\bar{x}$ minimizes $I$ as well by \ref{cluster_point_min}, this implies
\begin{align*}
	\lim_{n \to \infty} \frac{\mu^y(B_{\delta_n}(\bar{x}))}{\mu^y(B_{\delta_n}(x_{\delta_n}))}
	&= \lim_{n \to \infty} \frac{\mu^y(B_{\delta_n}(\bar{x}))}{\mu^y(B_{\delta_n}(\tilde{x}))}
	\lim_{n\to\infty} \frac{\mu^y(B_{\delta_n}(\tilde{x}))}{\mu^y(B_{\delta_n}(x_{\delta_n}))} \\
	&= \exp(I(\tilde{x}) - I(\bar{x}))1 = 1
\end{align*}
for the subsubsequence $\{\delta_n\}_{n\in\N}$ by \cref{form_OM_funct}.
Now \eqref{eq:map_prop_ubar} follows for the original sequence $\{\delta_n\}_{n\in\N}$ from a subsequence-subsequence argument.
\end{proof}

Now, we are able to prove the main result.
\begin{proof}[Proof of \cref{var_char_modes}]
By \cref{thmConvSub}, there exists $\bar{x} \in E$ which is both a mode of $\mu^y$ and a minimizer of the Onsager--Machlup functional $I$ of $\mu^y$.

Let $\hat{x}$ be a mode of $\mu^y$. Since $\bar{x}$ is also a mode, it follows that
\begin{align*}
	\lim_{\varepsilon \to 0} \frac{\mu^y(B_\varepsilon(\hat{x}))}{\mu^y(B_\varepsilon(\bar{x}))}
	&= \lim_{\varepsilon \to 0} \frac{\mu^y(B_\varepsilon(\hat{x}))}{\sup_{x \in X} \mu^y(B_\varepsilon(x))} \lim_{\varepsilon \to 0} \frac{\sup_{x \in X} \mu^y(B_\varepsilon(x))}{\mu^y(B_\varepsilon(\bar{x}))} = 1.
\end{align*}
Due to the Lipschitz continuity of $\Phi$ on bounded sets, we have
\begin{equation*}
	\frac{\mu^y(B_\varepsilon(\hat{x}))}{\mu^y(B_\varepsilon(\bar{x}))}
	= e^{\Phi(\bar{x}) - \Phi(\hat{x})} \frac{\int_{B_\varepsilon(\hat{x})} e^{\Phi(\hat{x})-\Phi(v)} \mu_0(\di v)}{\int_{B_\varepsilon(\bar{x})} e^{\Phi(\bar{x})-\Phi(v)} \mu_0(\di v)}
	\le e^{L \norm{\hat{x} - \bar{x}}_X} e^{2L\varepsilon} \frac{\int_{B_\varepsilon(\hat{x})} \mu_0(\di v)}{\int_{B_\varepsilon(\bar{x})} \mu_0(\di v)}.
\end{equation*}
This implies that $\hat{x} \in E$, as otherwise \cite[Lem.~3.7]{Dashti:2013} lead to
\begin{align*}
	1 &= \liminf_{\varepsilon \to 0} \frac{\mu^y(B_\varepsilon(\hat{x}))}{\mu^y(B_\varepsilon(\bar{x}))}
	\le e^{L \norm{\hat{x} - \bar{x}}_X} \liminf_{\varepsilon \to 0} \frac{\mu_0(B_\varepsilon(\hat{x}))}{\mu_0(B_\varepsilon(\bar{x}))} = 0,
\end{align*}
a contradiction. The definition of the OM functional yields
\begin{align*}
	1 = \lim_{\varepsilon \to 0} \frac{\mu^y(B_\varepsilon(\hat{x}))}{\mu^y(B_\varepsilon(\bar{x}))}
	&= \exp(I(\bar{x}) - I(\hat{x})),
\end{align*}
and consequently $I(\bar{x}) = I(\hat{x})$, i.e., $\hat{u}$ is also a minimizer of $I$.

Conversely, let $x^* \in E$ be a minimizer of $I$. Since $\bar{x}$ is also a minimizer of $I$ and a mode of $\mu^y$, \cref{form_OM_funct} tells us that
\begin{equation*}
	\lim_{\varepsilon \to 0} \frac{\mu^y(B_\varepsilon(x^*))}{\sup_{x \in X} \mu^y(B_\varepsilon(x))}
	= \lim_{\varepsilon \to 0} \frac{\mu^y(B_\varepsilon(x^*))}{\mu^y(B_\varepsilon(\bar{x}))} \lim_{\varepsilon \to 0} \frac{\mu^y(B_\varepsilon(\bar{x}))}{\sup_{x \in X} \mu^y(B_\varepsilon(x))}
	= \exp(I(\bar{x}) - I(x^*))1 = 1,
\end{equation*}
i.e., $x^*$ is also a mode of $\mu^y$.
\end{proof}


\section{Bayesian inverse problems}
\label{sect_ip}

Let us now consider the inverse problem of finding an unknown quantity $u$, given an indirect measurement
\begin{equation}
	\label{inv_prob}
	y = F(u) + \eta
\end{equation}
corrupted by additive noise $\eta$. Here, $u$ and $y$ each lie in a separable Hilbert space $X$ and $Y$, respectively. The relation between unknown and measured quantity is described by an operator $F$: $X \to Y$ and may be nonlinear.
We are particularly interested in the case when this problem is ill-posed.
We assume that the noise $\eta$ follows a distribution $\nu$.
Now, we take a Bayesian approach and assign a prior distribution $\mu_0$ to $u$, where we assume that $u$ and $\eta$ are independent. 

Let us assume that $\nu_z := \nu(\cdot - z)$ is absolutely continuous with respect to $\nu$ for all $z$ in a linear subspace $Z$ of $Y$, i.e., that $\{\nu_z\}_{z \in Z}$ is dominated by $\nu$. In this case, $\nu_z$ and $\nu$ are even equivalent since $\nu_{-z}$ is absolutely continuous w.r.t. to $\nu$ as well, and hence also $\nu = (\nu_z)_{-z}$ w.r.t. $\nu_z$.
If we furthermore assume that $\ran(F) \subseteq Z$, then $\nu_{F(u)}$ is a regular conditional distribution of $y$, given $u$, by \cite[Prop.~1.4]{Kretschmann:2019}.
The negative log-likelihood is then given by $\Psi(F(u),y)$, where
\[
	\Psi(z,y) := -\ln \frac{\di\nu_z}{\di\nu}(y)
\]
denotes for each $z \in Z$ the negative log-density of $\nu_z$ with respect to $\nu$. Here, the Radon--Nikodym derivative is nonzero $\nu$-almost everywhere due to the equivalence of $\nu_z$ and $\nu$.
If the likelihood $(u,y) \mapsto \exp(-\Psi(F(u),y))$ is a measurable function on $X \times Y$, and if the integral
\[
	\int_{X} \exp(-\Psi(F(x),y)) \mu_0(\di x)
\]
is positive for all $y \in Y$, then Bayes' formula 
\[
	\frac{\di\mu^y}{\di\mu_0}(u) 
	= \frac{\exp(-\Psi(F(u),y))}{\int_{X} \exp(-\Psi(F(x),y)) \mu_0(\di x)}
\]
defines a regular conditional distibution $\mu^y$ of $u$, given $y$, in terms of its density with respect to $\mu_0$ according to Bayes' theorem \cite[Thm.~1.3]{Kretschmann:2019}.
We see that $\mu^y$ satisfies \cref{ass:post_dist} \ref{ass:bayes_formula} with $\Phi(u,y) = \Psi(F(u),y)$.

Although the range condition $\ran(F) \subseteq Z$ is generally nontrivial in infinite-dimensional spaces, it is usually not restrictive in the context of inverse problems, since the forward operator is typically smoothing. Moreover, it is trivially satisfied in case of white Gaussian process noise, as will be discussed in the following subsection.
In \cref{lemSolEst}, we will see that the forward operator in case of the backward heat equation is infinitely smoothing. By Lemma 6.6 in \cite{kretschmann2022}, the forward operator in case of a deconvolution problem on $L^2(-1,1)$ with a specific kernel maps to the Sobolev space $H^4(-1,1)$.

\subsection{Gaussian noise}
\label{sec:gaussian_noise}

In case of nondegenerate centered Gaussian noise $\eta \sim \Normal_Q$ with covariance $Q$, $\nu_z = \Normal_{z,Q}$ is absolutely continuous with respect to $\nu = \Normal_Q$ for all $z \in Z = \ran(Q^{1/2})$ by the Cameron--Martin theorem \cite[Thm.~2.8]{DaPrato:2006}, and $\Psi$ is given by the Cameron--Martin formula
\begin{equation}
	\label{eq:Psi_gauss}
	\Psi(z,y) = \sum_{k=1}^\infty \left( \frac{\scalprod{z,e_k}_Y^2}{2\lambda_k} - \frac{\scalprod{z,e_k}_Y\scalprod{y,e_k}_Y}{\lambda_k} \right),
\end{equation}
where $(\lambda_k, e_k)_{k\in\N}$ denotes an eigensystem of $Q$. 

The following example shows that in case of an infinite-dimensional space $Y$, $\Phi(\cdot,y)$ is in general not bounded from below. That is, we can choose a forward mapping $F$, data $y \in Y$ and a sequence $(u^n)_{n\in\N}$ in $X$ such that $\Phi(u^n,y) \to -\infty$.
\begin{example}
\label{ex:gauss_unbounded}
Let $X$ and $Y$ be infinite-dimensional separable Hilbert spaces and let $(f_k)_{k\in\N}$ and $(e_k)_{k\in\N}$ be orthonormal bases of $X$ and $Y$, respectively. As a forward mapping, consider the bounded linear operator $A$: $X \to Y$ given by
\begin{equation*}
	Ax := \sum_{k=1}^\infty \frac{1}{k^2} \scalprod{x,f_k}_X e_k \quad \text{for all}~x \in X.
\end{equation*}
Solving $Ax = y$ can be interpreted as estimating the second derivative of a function $y \in C^2(0,1)$, see section 1.3.1 in \cite{Cavalier_2008}.
Consider, furthermore, Gaussian noise $\eta \sim \Normal_Q$ with covariance operator $Q$: $Y \to Y$,
\begin{equation*}
	Qz := \sum_{k=1}^\infty \frac{1}{k^2} \scalprod{z,e_k}_Y e_k = \left(AA^*\right)^\frac12 z \quad \text{for all}~z \in Y.
\end{equation*}
Then, $\Tr Q = \sum_{k=1}^\infty \frac{1}{k^2}$ is finite, and we have
\begin{align*}
	\ran(A) &= \left\{ z \in Y: \sum_{k=1}^\infty k^4 \scalprod{z,e_k}_Y^2 < \infty \right\} \\
	&\subseteq \left\{ z \in Y: \sum_{k=1}^\infty k^2 \scalprod{z,e_k}_Y^2 < \infty \right\} = \ran(Q^\frac12) = Z.
\end{align*}
The negative log-likelihood is given by
\begin{align*}
	\Phi(u,y) &= \Psi(Au,y) = \sum_{k=1}^\infty \left( \frac{\scalprod{Au,e_k}_Y^2}{2k^{-2}} - \frac{\scalprod{Au,e_k}_Y\scalprod{y,e_k}_Y}{k^{-2}} \right) \\
	&= \sum_{k=1}^\infty \left( \frac{\scalprod{Au - y,e_k}_Y^2}{2k^{-2}} - \frac{\scalprod{y,e_k}_Y^2}{2k^{-2}} \right)
\end{align*}
Now, we choose
\[ 
	y := \sum_{k=1}^\infty \frac{1}{k} e_k \qquad \text{and} \qquad
	u^n := \sum_{k=1}^n k f_k \quad \text{for all}~n \in \N.
\]
This way,
\[ \Phi(u^n,y) = \sum_{k=1}^n \left(0 - \frac{1}{2}\right) = -\frac{n}{2} \to -\infty \quad \text{as}~n \to \infty.\]
\end{example}

We examine under which conditions on the forward mapping $F$ the negative log-likelihood $\Phi(x,y) = \Psi(F(x),y)$ is Lipschitz continuous on bounded sets and satisfies the lower cone condition in case of additive Gaussian noise $\eta \sim \Normal_Q$. The majority of the proofs of this section are deferred to \cref{sect_proofs_ip}.

\begin{proposition}
	\label{cond_gauss}
	Assume that the forward mapping $F$ satisfies $\ran(F) \subseteq \ran(Q)$, that $Q^{-1} \circ F$ is Lipschitz continuous on bounded sets and that there exists $L_0 > 0$ such that
	\begin{equation}
		\label{Lip0_cond}
		\norm{Q^{-1}F(u) - Q^{-1}F(0)}_Y \le L_0 \norm{u}_X \quad \text{for all}~u \in X.
	\end{equation}
	Then $\Phi$: $X \times Y \to \R$, given by
	\begin{equation}
		\label{eq:Phi_gauss}
		\Phi(u,y) = \Psi(F(u),y) = \frac12\bignorm{Q^{-\frac12}F(u)}_Y^2 - \bigscalprod{Q^{-1}F(u),y}_Y,
	\end{equation}
	satisfies \cref{ass:phi}.
\end{proposition}

\begin{corollary}
	\label{cond_gauss_lin}
	If the forward mapping is a linear operator $A$ such that $\ran(A) \subseteq \ran(Q)$ and $Q^{-1}A$ is bounded, then $\Phi(u,y) = \Psi(Au,y)$ with $\Psi$ given by \eqref{eq:Psi_gauss} satisfies \cref{ass:phi}.
\end{corollary}

\begin{theorem}
	Let \cref{ass:post_dist} hold with $\Phi$ given by \eqref{eq:Phi_gauss}.
	Under the assumptions of \cref{cond_gauss} or \cref{cond_gauss_lin}, the modes of the posterior distribution $\mu^y$ are the minimizers of $I$: $\ran(\Sigma^{1/2}) \to \R$,
	\begin{equation}
		\label{eq:OM_funct_post_gauss}
		I(u) = \frac12\bignorm{F(u)}_Q^2 - \bigscalprod{Q^{-1}F(u),y}_Y + \frac12\norm{u}_\Sigma^2.
	\end{equation}
\end{theorem}
\begin{proof}
	By \cref{cond_gauss} or \cref{cond_gauss_lin}, $\Phi$ satisfies \cref{ass:phi}. This allows us to apply \cref{var_char_modes}, which tells us that the modes of $\mu^y$ coincide with the minimizers of its Onsager--Machlup functional. By \cref{form_OM_funct}, the Onsager--Machlup functional of $\mu^y$, in turn, is given by $u \mapsto \Phi(u,y) + \frac12\norm{u}_\Sigma^2$.
\end{proof}

\begin{example}
We return to \cref{ex:gauss_unbounded} and show that its set-up satisfies \cref{ass:phi}. The range of $A$
\begin{equation*}
	\ran(A) = \left\{ z \in Y: \sum_{k=1}^\infty k^4 \scalprod{z,e_k}_Y^2 < \infty \right\} = \ran(Q)
\end{equation*}
is equal to the range of $Q$, and we have
\[ Q^{-1}Ax = \sum_{k=1}^\infty \scalprod{x,f_k}_X e_k \quad \text{for all}~x \in X. \]
The operator $Q^{-1}A$ is bounded and therefore globally Lipschitz continuous. Moreover, condition \eqref{Lip0_cond} is satisfied with $L_0 = \norm{Q^{-1}A}_{X \to Y} = 1$.
Now, \cref{ass:phi} holds by \cref{cond_gauss}.
We see that here, $\Phi(\cdot,y)$ satisfies the lower cone condition but does not have a lower bound.
\end{example}

\subsubsection{White Gaussian noise}

Observations
\[ y = F(u) + \sigma w \]
perturbed by a Gaussian white noise process $\sigma w$ on $Y$, $\sigma > 0$, can be described rigorously by model \eqref{inv_prob}. A standard Gaussian white noise process $w$ almost surely takes values in the \emph{algebraic} dual space $Y^*$ of $Y$. It is characterized by the property that each marginal $\dualpair{w,g}_{Y^* \times Y}$, $g \in Y$, is a centered real Gaussian random variable and the marginals are corollated via $\mathbb{E}[\dualpair{w,g}_{Y^* \times Y}\dualpair{w,h}_{Y^* \times Y}] = \scalprod{g,h}_Y$ for all $g,h \in Y$.
For any self-adjoint, positive, trace class operator $Q$ on $Y$, $w$ moreover almost surely takes values in the \emph{topological} dual space $V' \supset Y'$ of $V := \ran(Q^{1/2}) \subset Y$, that is, realizations of $w$ are almost surely continuous linear functionals on $V$, see appendix 7.4 of \cite{Nickl2020Bernstein}, \cite{lassas2009}, or Remark 4.3 in \cite{kretschmann2022}. This allows interpreting $w$ rigorously as a $V'$-valued Gaussian random variable.

Let $\nu$ denote the distribution of $\sigma w$ on $V'$. Then, the shifted measure $\nu_{\scalprod{z,\cdot}_Y}$ is absolutely continuous w.r.t. $\nu$ for each $z \in Z := Y$ by the Cameron--Martin theorem \cite[Prop.~6.1.5]{gine_nickl_2015}, and
\begin{equation}
	\label{eq:Psi_gauss_white}
	\Psi(z,y) = -\ln \frac{\di\nu_{\scalprod{z,\cdot}}}{\di\nu}(y) = \frac{1}{2\sigma^2}\norm{z}_Y^2 - \frac{1}{\sigma^2}\dualpair{y,z}_{Y^* \times Y} \quad \nu\text{-a.e..}
\end{equation}

\begin{proposition}
	\label{cond_white_gauss}
	Let $Q$ be a self-adjoint, positive, trace class operator on $Y$. Let furthermore $V = \ran(Q^{1/2})$ be equipped with the norm $\norm{\cdot}_V = \norm{Q^{-1/2}\cdot}_Y$.
	If the forward mapping $F$ satisfies $\ran(F) \subseteq V$, is Lipschitz continuous on bounded sets as a mapping from $X$ to $V$, and there exists $L_0 > 0$ such that
	\[ \norm{F(u) - F(0)}_{V} \le L_0\norm{u}_X \quad \text{for all}~u \in X, \]
	then $\Phi$: $X \times V' \to \R$, given by
	\begin{equation*}
		\Phi(u,y) = \Psi(F(u),y) = \frac{1}{2\sigma^2}\norm{F(u)}_Y^2 - \frac{1}{\sigma^2}\dualpair{y,F(u)}_{V' \times V},
	\end{equation*}
	satisfies \cref{ass:phi}.
\end{proposition}

\begin{corollary}
	\label{cond_white_gauss_lin}
	If the forward mapping is an injective Hilbert--Schmidt operator $A$, then $\Phi(u,y) = \Psi(Au,y)$ with $\Psi$ given by \eqref{eq:Psi_gauss_white} satisfies \cref{ass:phi}.
\end{corollary}

\begin{remark}
	\label{rem:cond_white_gauss_lin}
	In case of a linear forward operator $A$, the assumptions of \cref{cond_white_gauss} simplify to the condition that there exists a self-adjoint, positive, trace class operator $Q$ on $Y$ such that $\ran(A) \subseteq \ran(Q^{1/2})$ and $Q^{-1/2}A$ is bounded.
	The assumption of \cref{cond_gauss_lin} that $Q^{-1}A$ is bounded is slightly stronger and not as straightforward to interpret since it depends on the given covariance structure of the noise.
	\Cref{lemSolEst} shows that the solution operator of the forward heat equation satisfies this condition in combination with Gaussian noise of the same covariance as the considered Laplacian noise.
\end{remark}

\subsection{Laplacian noise}
\label{sec:laplacian_noise}

We are particularly interested in the case of Laplacian noise, which we define as follows.
Let $\Laplace_{a,\lambda}$ denote the usual Laplace distribution on $\R$ with mean $a \in \R$ and variance $\lambda > 0$.
We define a Laplacian random variable $\xi$ with values in a separable Hilbert space $X$ for $a \in X$ and a self-adjoint, positive, trace class operator $Q \in L(X)$ with eigensystem $(\lambda_k,e_k)_{k\in\N}$ by
\begin{equation}
	\label{eq:def_xi}
	\xi := \sum_{k=1}^\infty \xi_k e_k,
\end{equation}
where $(\xi_k)_{k\in\N}$ are independent real-valued random variables with $\xi_k \sim \Laplace_{\scalprod{a,e_k}_X,\lambda_k}$ for all $k\in\N$. We denote its distribution by $\Laplace_{a,Q}$ or just $\Laplace_Q$ if $a = 0$. 

\begin{lemma}
	\label{xi_well_def}
	The series in \eqref{eq:def_xi} defines an $X$-valued random variable.
\end{lemma}

Note that in contrast with Gaussian random variables, a different choice of the eigenbasis $(e_k)_{k\in\N}$, if there exists a freedom of choice, defines a random variable with a different distribution.
Like Gaussian random variables, Laplacian random variables have the central property that for all elements of the chosen basis $(e_k)_{k\in\N}$, the marginals $\scalprod{\xi,e_k}_X = \xi_k$ are independent real valued Laplacian random variables. Arbitrary marginals $\scalprod{\xi,g}_X$, $g \in X$, are, however, in general not Laplacian.
This construction of Laplacian measures is similar to that of $1$-exponential measures in \cite{agapiou2021}, with the difference that it takes the covariance operator $Q$ as a starting point rather than the basis $(e_k)_{k\in\N}$.
It can also be considered a generalization of $B_1^s$-Besov measures from functions on $\R^d$ using a wavelet basis to arbitrary separable Hilbert spaces, cf.~\cite{Gerth_2014}.
For more details on the construction of Laplacian countable product measures, see also section 3.4 of \cite{Kretschmann:2019}.

The following result characterizes the admissible shifts of a Laplacian measure in terms of its covariance operator analogous to the Cameron--Martin theorem \cite[Thm.~2.8]{DaPrato:2006}. For its proof, see Theorem 3.10 in \cite{Kretschmann:2019} or Theorem 4.8 and Corollary 4.20 in \cite{Ayanbayev_2021b}.
\begin{theorem}
	\label{thmLaplaceAlt}
	Let $Q \in L(X)$ be a self-adjoint, positive, trace class operator with eigensystem $(\lambda_k, e_k)_{k\in\N}$ on a separable Hilbert space $X$ and $a \in X$. Let, moreover, $\Laplace_{a,Q}$ and $\Laplace_Q$ be defined using the eigenbasis $(e_k)_{k\in\N}$.
	\begin{enumerate}
		\item If $a \notin \ran(Q^\frac12)$, then $\Laplace_{a,Q}$ and $\Laplace_Q$ are singular.
		\item If $a \in \ran(Q^\frac12)$, then $\Laplace_{a,Q}$ and $\Laplace_Q$ are equivalent and the density $\frac{\di\Laplace_{a,Q}}{\di\Laplace_Q}$ is given by
		\begin{align*}
			\frac{\di\Laplace_{a,Q}}{\di\Laplace_Q}(x) 
			&= \exp\left( -\sqrt{2} \sum_{k=1}^\infty \left(\left|\scalprod{Q^{-\frac12}(x - a), e_k}_X\right| - \left|\scalprod{Q^{-\frac12}x, e_k}_X\right|\right) \right) \\
			&= \exp\left( -\sqrt{2}\sum_{k=1}^\infty \frac{ \left|\scalprod{x,e_k}_X - \scalprod{a,e_k}_X\right| - \left|\scalprod{x,e_k}_X\right| }{ \sqrt{\lambda_k} } \right)
		\end{align*}
	$\Laplace_Q$-almost everywhere.
	\end{enumerate}
\end{theorem}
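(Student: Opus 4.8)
The plan is to reduce both assertions to a dichotomy for infinite products of one-dimensional measures and to apply Kakutani's theorem. First I would identify each $x \in X$ with its coordinate sequence $(x_k)_{k\in\N}$, $x_k = \scalprod{x,e_k}$, relative to the orthonormal basis $\{e_k\}_{k\in\N}$. Under this measurable isomorphism $\Laplace_Q$ becomes the product measure $\bigotimes_{k\in\N}\Laplace_{0,\lambda_k}$ and $\Laplace_{a,Q}$ becomes $\bigotimes_{k\in\N}\Laplace_{a_k,\lambda_k}$ on $\R^\N$; both are concentrated on $\ell^2$, since $\sum_k\lambda_k = \Tr Q < \infty$ and $a \in X$, so it suffices to prove the claims for these product measures. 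The one-dimensional factors are mutually equivalent with
\[ \frac{\di\Laplace_{a_k,\lambda_k}}{\di\Laplace_{0,\lambda_k}}(t) = \exp\left(-\frac{\sqrt2}{\sqrt{\lambda_k}}\bigl(\abs{t - a_k} - \abs{t}\bigr)\right), \]
so the formal product of these densities already has exactly the shape claimed in part (ii); what remains is to decide when this product converges to an honest density (equivalence) and when it degenerates (singularity).

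This is precisely what Kakutani's theorem settles: the two product measures are either equivalent or mutually singular, and they are equivalent --- with Radon--Nikodym derivative equal to the $\Laplace_Q$-almost everywhere limit of the partial products of the factor densities --- if and only if $\prod_k\rho_k > 0$, where $\rho_k := \int_\R\sqrt{p_{a_k,\lambda_k}\,p_{0,\lambda_k}}\,\di t$ is the Hellinger affinity of the $k$-th pair of factors. A change of variables $t = \sqrt{\lambda_k}\,s$ reduces $\rho_k$ to a single elementary integral (split the real line at $0$ and at $\alpha_k$), yielding
\[ \rho_k = \Bigl(1 + \tfrac{1}{\sqrt2}\abs{\alpha_k}\Bigr)\exp\bigl(-\tfrac{1}{\sqrt2}\abs{\alpha_k}\bigr), \qquad \alpha_k := \frac{a_k}{\sqrt{\lambda_k}}. \]

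It remains to connect $\sum_k(1-\rho_k)$ with membership in $\ran(Q^{1/2})$. Expanding $s\mapsto(1+s/\sqrt2)e^{-s/\sqrt2} = 1 - \tfrac14 s^2 + O(s^3)$ near $0$ and noting $\rho_k\to 0$ as $\abs{\alpha_k}\to\infty$ gives a two-sided bound $c\,\min(\alpha_k^2,1) \le 1 - \rho_k \le C\,\min(\alpha_k^2,1)$ with absolute constants $c,C>0$; hence $\sum_k(1-\rho_k) < \infty$ if and only if $\sum_k\alpha_k^2 < \infty$, and $\sum_k\alpha_k^2 = \norm{Q^{-1/2}a}_X^2$, which is finite exactly when $a\in\ran(Q^{1/2})$. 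For part (i), $a\notin\ran(Q^{1/2})$ then forces $\prod_k\rho_k = 0$, so Kakutani's theorem yields mutual singularity; for part (ii), $a\in\ran(Q^{1/2})$ gives $\prod_k\rho_k > 0$, hence equivalence, and identifies the density as the almost everywhere limit of the partial products of the factor densities, which is the displayed formula. I expect the only mildly delicate points to be the almost-everywhere convergence of the infinite product defining the density and the rigorous product-measure identification on $\ell^2$ --- both of which are standard and subsumed in Kakutani's theorem once the affinity sum is controlled --- so the one genuine computation is the affinity integral together with the elementary two-sided estimate for $1-\rho_k$.
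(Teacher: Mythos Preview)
Your proposal is correct and follows essentially the same route as the paper: both reduce to Kakutani's theorem on the product space, compute the one-dimensional Hellinger affinities $\rho_k=(1+s_k)e^{-s_k}$ with $s_k=\abs{a_k}/\sqrt{2\lambda_k}$, and show that the Kakutani criterion is equivalent to $\sum_k a_k^2/\lambda_k<\infty$, i.e., $a\in\ran(Q^{1/2})$. The only cosmetic difference is that the paper controls $-\ln\rho_k=s_k-\ln(1+s_k)$ via the Mercator-series bounds $\tfrac16 s_k^2\le s_k-\ln(1+s_k)\le\tfrac12 s_k^2$ for $s_k<1$, whereas you control $1-\rho_k$ directly through a Taylor expansion; these are interchangeable since $\prod_k\rho_k>0\iff\sum_k(1-\rho_k)<\infty\iff\sum_k(-\ln\rho_k)<\infty$ for $\rho_k\in(0,1]$.
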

\Cref{thmLaplaceAlt} shows that for Laplacian noise $\eta \sim \Laplace_Q$, $\nu_z = \Laplace_{z,Q}$ is absolutely continuous with respect to $\nu = \Laplace_Q$ for all $z \in Z = \ran(Q^{1/2})$ as in the Gaussian case, and
\begin{equation}
	\label{Psi_laplace}
	\Psi(z,y) = \sqrt{2} \sum_{k=1}^\infty \frac{\abs{\scalprod{y,e_k}_Y - \scalprod{z,e_k}_Y} - \abs{\scalprod{y,e_k}_Y}}{\lambda_k^{1/2}}.
\end{equation}

\begin{example}
	\label{ex:unbounded_phi}
	We return to \cref{ex:gauss_unbounded} and show that in case of Laplacian noise $\eta \sim \Laplace_Q$, $\Phi(\cdot,y)$ is not bounded from below either.
	Here, we have
	\begin{align*}
		\Phi(u^n,y) &= \Psi(Au^n,y) = \sqrt{2} \sum_{k=1}^\infty \frac{\abs{\scalprod{y - Au^n,e_k}_Y} - \abs{\scalprod{y,e_k}_Y}}{k^{-1}} \\
		&= \sqrt{2} \sum_{k=1}^n \left(0 - 1\right) = -\sqrt{2}n \to -\infty
	\end{align*}
	as $n$ tends to infinity for $y$ and $u^n$ chosen as in \cref{ex:gauss_unbounded}.
\end{example}

We examine under which conditions on the forward mapping $F$ the negative log-likelihood $\Phi(u,y) = \Psi(F(u),y)$ is Lipschitz continuous in $u$ in case of additive Laplacian noise $\eta \sim \Laplace_Q$.

\begin{proposition}
	\label{cond_nonlin_op}
	Let $(\lambda_k, e_k)_{k\in\N}$ denote an eigensystem of $Q$ and let the forward mapping $F$: $X \to Y$ satisfy one of the following two conditions.
	\begin{enumerate}
		\item There exists $C > 0$ such that
		\[ \sum_{k=1}^\infty \frac{\abs{\scalprod{F(x_1) - F(x_2), e_k}_Y}}{\lambda_k^{1/2}} \le C \norm{x_1 - x_2}_X \quad \text{for all}~x_1,x_2 \in X, \]
		\emph{or}
		\item $F$ is Fréchet differentiable and there exists $C > 0$ such that
		\begin{equation}
			\label{cond_F}
			\sum_{k=1}^\infty \frac{\norm{F'(x)^*e_k}_X}{\lambda_k^{1/2}} \le C \quad \text{for all}~x \in X.
		\end{equation}
	\end{enumerate}
	Then $\Phi$: $X \times Y \to \R$, given by
	\begin{equation}
		\label{eq:Phi_laplace}
		\Phi(u,y) = \Psi(F(u),y) = \sqrt{2} \sum_{k=1}^\infty \frac{\abs{\scalprod{y - F(u),e_k}_Y} - \abs{\scalprod{y,e_k}_Y}}{\lambda_k^{1/2}},
	\end{equation}
	is Lipschitz continuous in $u$ for all $y \in Y$.
	In particular, \cref{ass:phi} is satisfied.
\end{proposition}

\begin{proposition}
	\label{cond_lin_op}
	If the forward mapping is a compact linear operator $K$ such that $KK^*$ is diagonalizable with respect to the eigenvectors $(e_k)_{k\in\N}$ of $Q$ and $Q^{-1/2}K$ is a Hilbert--Schmidt operator, then $\Phi(u,y) = \Psi(Au,y)$ with $\Psi$ given by \eqref{Psi_laplace} is Lipschitz continuous in $u$ for all $y \in Y$ with Lipschitz constant $\sqrt{2}\norm{Q^{-1/2}K}_\text{HS}$ independent of $y$.
	In particular, \cref{ass:phi} is satisfied.
\end{proposition}

\begin{theorem}
	Let \cref{ass:post_dist} hold with $\Phi$ given by \eqref{eq:Phi_laplace}.
	Under the assumptions of \cref{cond_nonlin_op} or \cref{cond_lin_op}, the modes of the posterior distribution $\mu^y$ are the minimizers of $I$: $\ran(\Sigma^{1/2}) \to \R$,
	\[ I(u) = \sqrt{2} \sum_{k=1}^\infty \frac{\abs{\scalprod{y - F(u),e_k}_Y} - \abs{\scalprod{y,e_k}_Y}}{\lambda_k^{1/2}} + \frac12\norm{u}_\Sigma^2. \]
\end{theorem}
\begin{proof}
	By \cref{cond_nonlin_op} or \cref{cond_lin_op}, $\Phi$ satisfies \cref{ass:phi}. This allows us to apply \cref{var_char_modes}, which tells us that the modes of $\mu^y$ coincide with the minimizers of its Onsager--Machlup functional. By \cref{form_OM_funct}, the Onsager--Machlup functional of $\mu^y$, in turn, is given by $u \mapsto \Phi(u,y) + \frac12\norm{u}_\Sigma^2$.
\end{proof}

\begin{example}
	We return to \cref{ex:gauss_unbounded} and show that in case of Laplacian noise, $\Phi(\cdot,y)$ is Lipschitz continuous. To this end, we note that $Q^{-1/2}A$ is Hilbert--Schmidt because
	\[ \bignorm{Q^{-\frac12}A}_\text{HS}^2 = \bignorm{A^*Q^{-\frac12}}_\text{HS}^2 = \sum_{k=1}^\infty \norm{A^* Q^{-\frac12} e_k}_X^2 = \sum_{k=1}^\infty k^2 \norm{A^* e_k}_X^2 = \sum_{k=1}^\infty \frac{1}{k^2}. \]
	Thus, $\Phi(\cdot,y)$ is Lipschitz continuous by \cref{cond_lin_op}.
	We see that as in case of Gaussian noise, $\Phi(\cdot,y)$ satisfies the lower cone condition but is not bounded from below.
\end{example}

In \cref{prop:phi_cont}, we will see that the solution operator of the forward heat equation also satisfies the assumptions of \cref{cond_lin_op} in combination with the considered Laplacian noise.


\section{Severely ill-posed linear problems}
\label{sect_lin_prob}

Eventually, we study the rate of convergence of the MAP estimator based upon a Gaussian prior distribution as the noise level tends to zero for the example of a severely ill-posed linear problem with measurements corrupted by Laplacian noise.
This problem can be considered as a generalization of the inverse heat equation. It has been studied with measurements corrupted by Gaussian noise in subsections 1.2 and 3.3 of \cite{das_stu_2017}. For more details on this example, see also chapter 5 of \cite{Kretschmann:2019}.
The proofs of some of the results of this section are deferred to \cref{sect_proofs_lin_prob}.

\subsection{Set-up}

Throughout this section we consider the following set-up.
Let $\{\phi_k\}_{k\in\N}$ be an orthonormal basis of a separable Hilbert space $X$. Moreover, let $\{\alpha_k\}_{k\in\N}$ be a positive, non-decreasing sequence, $C_-, C_+ > 0$, and $d \in \N$ such that
\[
	C_- k^\frac{2}{d} \le \alpha_k \le C_+ k^\frac{2}{d} \quad \text{for all }k \in \N.
\]
Now, we define an unbounded linear operator $A$ on
\[
	\dom(A) := \left\{ x \in X: \sum_{k=1}^\infty \alpha_k^2 \abs{\scalprod{x,\phi_k}_X}^2 < \infty \right\}
\]
by
\begin{equation}
	\label{eq:def_A}
	Ax := \sum_{k=1}^\infty \alpha_k \scalprod{x,\phi_k}_X \phi_k.
\end{equation}
Next, we define the forward operator $K$: $X \to X$ via functional calculus as $K := e^{-A}$, i.e., by
\begin{equation}
	\label{eq:def_K}
	Kx := \sum_{k=1}^\infty e^{-\alpha_k} \scalprod{x,\phi_k}_X \phi_k.
\end{equation}
As a limit of linear operators with finite-dimensional range, $K$ is a compact linear operator.
We then consider the inverse problem of finding $u \in X$, given a measurement $y \in Y := X$ related to $u$ by
\begin{equation}
	\label{eq:lin_model}
	y = Ku + \eta,
\end{equation}
where $\eta \in X$ is noise.

\begin{example}
	For appropriately chosen subsets $\Omega$ of $\R^d$ (e.g. bounded, open, and with $C^\infty$ boundary), the weak Laplace operator $A := -\Delta$ in $X := L^2(\Omega)$ has the aforementioned properties, see Example 5.5 in \cite{Kretschmann:2019}. The eigenfunctions of $A$ under Dirichlet boundary conditions form an orthonormal basis of $L^2(\Omega)$ by Theorem 6.5.1 in \cite{Evans:1998}, and the corresponding eigenvalues grow in the order of $k^{2/d}$ according to Weyl's asymptotic formula (see Theorem 8.16 and Remark 8.17 in \cite{Roe:1998}). The domain of $A$ is then given by $\dom(A) = H^2(\Omega) \cap H_0^1(\Omega)$.
	
	For $u \in L^2(\Omega)$, $v(t) := e^{-tA}u$ constitutes the unique solution to the heat equation
	\[ \frac{\di v(t)}{\di t} = \Delta v(t) \quad \text{for}~t > 0 \]
	with initial condition $v(0) = u$ by Proposition 5.20 in \cite{Kretschmann:2019}. For $A = -\Delta$, the aforementioned inverse problem thus corresponds to finding the heat distribution $u$ on $\Omega$ at time $0$ from a noisy measurement $y$ of the heat distribution $Ku = e^{-A}u$ at time $1$.
\end{example}

For $s \in \R$, we define powers $A^s$ of $A$ on
\[
	\dom(A^{s}) := \left\{ x \in X: \sum_{k=1}^\infty \alpha_k^{2s} \abs{\scalprod{x,\phi_k}_X}^2 < \infty \right\}
\]
via functional calculus as
\[
	A^s x := \sum_{k=1}^\infty \alpha_k^s \scalprod{x,\phi_k}_X \phi_k.
\]
The operator $A$ induces a decreasing scale of Hilbert spaces $\{\X{s}\}_{s \ge 0}$, where
\begin{equation}
	\label{eq:def_Xs}
	\X{s} := \dom(A^\frac{s}{2}) = \ran(A^{-\frac{s}{2}}) = \left\{ x \in X: \sum_{k=1}^\infty \alpha_k^s \abs{\scalprod{x,\phi_k}_X}^2 < \infty \right\}
\end{equation}
is endowed with the norm $\norm{x}_\X{s} := \norm{A^{s/2}x}_X$.
The operator $A^{-s}$ is trace class for any $s > d/2$ since
\begin{equation}
	\label{eq:pow_A_trace_class}
	\Tr A^{-s} = \sum_{k=1}^\infty \scalprod{A^{-s}\phi_k,\phi_k}_X = \sum_{k=1}^\infty \alpha_k^{-s} \le C_-^{-s} \sum_{k=1}^\infty k^{-\frac{2s}{d}}.
\end{equation}

We model the inverse problem in a Bayesian framework. We assume that the noise follows a Laplacian distribution $\nu := \Laplace_{b^2A^{-\beta}}$ with $b > 0$ and $\beta > d/2$, and we assign a centered Gaussian prior distribution $\mu_0 := \Normal_{r^2A^{-\tau}}$ with $r > 0$ and $\tau > d/2$. 
Here, the measure $\Laplace_{b^2A^{-\beta}}$ is defined using the basis $\{\phi_k\}_{k\in\N}$.
As before, we assume that $u$ and $\eta$ are independent.

\subsection{Posterior distribution}

If the shifted measure $\Laplace_{e^{-A}u, b^2 A^{-\beta}}$ is absolutely continuous with respect to the noise distribution $\Laplace_{b^2 A^{-\beta}}$ for every $u \in X$, then it follows from \cite[Prop.~1.4]{Kretschmann:2019} that
\[ (u,V) \mapsto \Laplace_{e^{-A}u, b^2 A^{-\beta}}(V) \]
is a regular conditional distribution of $y$, given $u$.
\Cref{thmLaplaceAlt} tells us that $\Laplace_{e^{-A}u, b^2 A^{-\beta}}$ and $\Laplace_{b^2 A^{-\beta}}$ are equivalent if and only if $e^{-A}u \in \ran(A^{-\beta/2})$.
This is indeed the case for all $u \in X$ by the following lemma.
\begin{lemma}
	\label{lemSolEst}
	Let $s > 0$. Then $\ran(\exp(-tA)) \subseteq \X{s}$ for all $t > 0$ and there is a $C = C(s) > 0$, such that
	\[ \norm{\exp(-tA)u}_\X{s} \le C t^{-\frac{s}{2}}\norm{u}_X \quad \text{for all }t > 0, \]
	where $A$ is defined as in \eqref{eq:def_A} and $\X{s}$ as in \eqref{eq:def_Xs}.
\end{lemma}
From \cref{thmLaplaceAlt} we now obtain for each $u \in X$ the density
\begin{equation*}
	\frac{\di \Laplace_{e^{-A}u, b^2 A^{-\beta}}}{\di \Laplace_{b^2 A^{-\beta}}}(y)
	= \exp(-\Phi(u,y))
\end{equation*}
of the conditional distribution of $y$, given $u$, with respect to that of the noise $\eta$, where
\begin{equation} 
	\label{eq:def_phi}
	\Phi(u,y) := \sqrt{2} \sum_{k=1}^\infty \frac{ \left|\scalprod{y,\varphi_k}_X - e^{-\alpha_k}\scalprod{u,\varphi_k}_X\right| - \left|\scalprod{y,\varphi_k}_X\right|}{b\alpha_k^{-\beta/2}}
	\quad \text{for all}~u,y \in X.
\end{equation}

In order to derive the conditional distribution of $u$, given $y$, using Bayes' theorem, we record the following properties of $\Phi$.

\begin{proposition}
	\label{prop:phi_cont}
	The function $\Phi$: $X \times X \to \R$, defined by \eqref{eq:def_phi}, is continuous, and for every $y \in X$, $u \mapsto \Phi(u,y)$ is Lipschitz continuous with a Lipschitz constant independent of $y$.
\end{proposition}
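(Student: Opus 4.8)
The plan is to estimate $\Phi(u_1,y) - \Phi(u_2,y)$ directly from the series representation \eqref{eq:def_phi}, using the elementary inequality $\bigabs{\,\abs{s} - \abs{t}\,} \le \abs{s-t}$ on each summand, and then to bound the resulting series by $\norm{u_1 - u_2}_X$ times a constant depending only on $Q = b^2 A^{-\beta}$. For joint continuity, I would argue separately that the map is continuous in $y$ for fixed $u$ (again via the triangle inequality on the series, together with dominated convergence to handle the tail) and then combine this with the uniform-in-$y$ Lipschitz bound in $u$ to get joint continuity.

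First, for the Lipschitz estimate, fix $y \in X$ and $u_1, u_2 \in X$. Writing $u^{(i)}_k := \scalprod{u_i,\varphi_k}_X$ and $y_k := \scalprod{y,\varphi_k}_X$, termwise application of $\bigabs{\,\abs{s}-\abs{t}\,}\le\abs{s-t}$ with $s = y_k - e^{-\alpha_k}u^{(1)}_k$ and $t = y_k - e^{-\alpha_k}u^{(2)}_k$ gives
\begin{equation*}
	\abs{\Phi(u_1,y) - \Phi(u_2,y)} \le \sqrt{2}\sum_{k=1}^\infty \frac{e^{-\alpha_k}\,\abs{u^{(1)}_k - u^{(2)}_k}}{b\alpha_k^{-\beta/2}} = \frac{\sqrt{2}}{b}\sum_{k=1}^\infty \alpha_k^{\beta/2}e^{-\alpha_k}\,\abs{u^{(1)}_k - u^{(2)}_k}.
\end{equation*}
By Cauchy--Schwarz this is bounded by $\frac{\sqrt 2}{b}\bigl(\sum_k \alpha_k^{\beta}e^{-2\alpha_k}\bigr)^{1/2}\norm{u_1-u_2}_X$, and the series $\sum_k \alpha_k^{\beta}e^{-2\alpha_k}$ converges because $\alpha_k \ge C_- k^{2/d} \to \infty$, so $\alpha_k^{\beta}e^{-2\alpha_k}$ decays faster than any power of $k$ (comparable to the trace-class-type estimate \eqref{eq:pow_A_trace_class}, but with exponential rather than polynomial decay). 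Hence $L := \frac{\sqrt2}{b}\bigl(\sum_k \alpha_k^{\beta}e^{-2\alpha_k}\bigr)^{1/2}$ is a finite Lipschitz constant independent of $y$.

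For continuity in $y$, fix $u$ and let $y^{(n)} \to y$ in $X$. Each summand in \eqref{eq:def_phi} is a continuous function of $y$, and the absolute value of the $k$-th summand is bounded, via $\bigabs{\,\abs{s}-\abs{t}\,}\le\abs{s-t}$ with $s = y_k - e^{-\alpha_k}u_k$, $t = y_k$, by $\sqrt2\,b^{-1}\alpha_k^{\beta/2}e^{-\alpha_k}\abs{u_k}$, a summable majorant independent of $y$ (again by Cauchy--Schwarz and the exponential decay). Dominated convergence for series then yields $\Phi(u,y^{(n)}) \to \Phi(u,y)$. Combining this with the Lipschitz bound, for $(u^{(n)},y^{(n)}) \to (u,y)$ we split $\abs{\Phi(u^{(n)},y^{(n)}) - \Phi(u,y)} \le \abs{\Phi(u^{(n)},y^{(n)}) - \Phi(u,y^{(n)})} + \abs{\Phi(u,y^{(n)}) - \Phi(u,y)} \le L\norm{u^{(n)} - u}_X + \abs{\Phi(u,y^{(n)}) - \Phi(u,y)}$, and both terms vanish; this gives joint continuity.

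I do not expect a serious obstacle here: the only point requiring a little care is justifying the interchange of limit and infinite sum (both for the dominated-convergence step in $y$ and implicitly in asserting that $\Phi$ is well-defined and real-valued for every $u,y \in X$, which already follows from the same majorant argument). The exponential decay of $e^{-\alpha_k}$ against the polynomial growth $\alpha_k^{\beta/2}$ makes every relevant series converge comfortably, so all the estimates are routine once the termwise $\bigabs{\,\abs{s}-\abs{t}\,}\le\abs{s-t}$ bound is in place.
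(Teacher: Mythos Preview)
Your proof is correct and follows essentially the same route as the paper: termwise use of $\bigl|\,|s|-|t|\,\bigr|\le|s-t|$, then Cauchy--Schwarz for the Lipschitz bound in $u$, a separate continuity argument in $y$, and the triangle inequality for joint continuity. The only cosmetic differences are that the paper factors $\alpha_k^{\beta/2}e^{-\alpha_k}=\alpha_k^{-\beta/2}\cdot(\alpha_k^{\beta}e^{-\alpha_k})$, bounds the second factor uniformly by $\beta^\beta e^{-\beta}$, and then applies Cauchy--Schwarz against $\sum_k\alpha_k^{-\beta}=\Tr A^{-\beta}$, whereas you apply Cauchy--Schwarz directly against $\sum_k\alpha_k^{\beta}e^{-2\alpha_k}$; and for continuity in $y$ the paper does an explicit $\varepsilon$--$\delta$ tail-splitting where you invoke dominated convergence with the same majorant. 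Both choices are equivalent and your version is, if anything, slightly more streamlined.
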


\begin{proposition}
	\label{prop:Z_pos_fin}
	Let $r > 0$ and $\tau > d/2$.
	Then $u \mapsto \exp(-\Phi(u,y))$ is $\Normal_{r^2 A^{-\tau}}$-integrable for all $y \in X$ and there exists  $C_Z > 0$ such that
	\begin{equation*}
		\int_X \exp(-\Phi(u,y)) \Normal_{r^2 A^{-\tau}}(\di u) \ge C_Z \quad \text{for all }y \in X,
	\end{equation*}
	where $\Phi$ is defined as in \eqref{eq:def_phi}.
\end{proposition}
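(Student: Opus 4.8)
The plan is to sandwich $\exp(-\Phi(\cdot,y))$ between two $y$-independent functions by means of the uniform Lipschitz estimate from \cref{prop:phi_cont}. First I would note that $\Phi(0,y) = 0$ for every $y \in X$: this is immediate from the defining series \eqref{eq:def_phi}, every summand of which vanishes at $u = 0$. Let $L > 0$ be a Lipschitz constant for $u \mapsto \Phi(u,y)$ that is independent of $y$, as provided by \cref{prop:phi_cont}. Then
\[ -L\norm{u}_X \le \Phi(u,y) \le L\norm{u}_X \qquad\text{for all } u,y \in X, \]
and consequently $\exp(-L\norm{u}_X) \le \exp(-\Phi(u,y)) \le \exp(L\norm{u}_X)$ for all $u,y \in X$.

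For integrability, I would invoke Fernique's theorem for the centered Gaussian measure $\Normal_{r^2 A^{-\tau}}$ on $X$: there is $\delta > 0$ with $\int_X \exp(\delta\norm{u}_X^2)\,\Normal_{r^2 A^{-\tau}}(\di u) < \infty$. Since $L\norm{u}_X \le \delta\norm{u}_X^2 + L^2/(4\delta)$, this yields $\int_X \exp(L\norm{u}_X)\,\Normal_{r^2 A^{-\tau}}(\di u) < \infty$, and the upper sandwich bound shows that $u \mapsto \exp(-\Phi(u,y))$ is $\Normal_{r^2 A^{-\tau}}$-integrable for every $y \in X$.

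For the uniform lower bound I would integrate the lower sandwich bound to get
\[ \int_X \exp(-\Phi(u,y))\,\Normal_{r^2 A^{-\tau}}(\di u) \ge \int_X \exp(-L\norm{u}_X)\,\Normal_{r^2 A^{-\tau}}(\di u) =: C_Z \]
for all $y \in X$; the right-hand side no longer depends on $y$ and is strictly positive, e.g.\ because the integrand is bounded below by $e^{-L}$ on $B_1(0)$, which has positive prior mass. This proves the claim. I do not anticipate any genuine difficulty here: the only non-elementary input is Fernique's theorem, and everything else follows from the $y$-uniform Lipschitz continuity of $\Phi$ together with the normalization $\Phi(0,y) = 0$.
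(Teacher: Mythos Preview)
Your proposal is correct and follows essentially the same route as the paper: both use the $y$-uniform Lipschitz bound from \cref{prop:phi_cont} together with $\Phi(0,y)=0$ to sandwich $\exp(-\Phi(\cdot,y))$ between $\exp(-L\norm{u}_X)$ and $\exp(L\norm{u}_X)$, invoke Fernique's theorem for the upper integrability, and bound the lower integral from below by $e^{-L}\Normal_{r^2 A^{-\tau}}(B_1(0))>0$. The only cosmetic difference is that the paper packages the Fernique step into a separate lemma and spells out explicitly why the unit ball has positive Gaussian mass (via the topological support of $\Normal_{r^2 A^{-\tau}}$).
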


With this knowledge, we are able to apply Bayes' formula.
\begin{theorem}
	\label{post_dist_lin_prob}
	Assume a Gaussian prior distribution $\Normal_{r^2 A^{-\tau}}$ for $u$ with $A$ defined as in \eqref{eq:def_A}, $r > 0$, and $\tau > d/2$. Assume further that $(u, V) \mapsto \Laplace_{e^{-A}u, b^2 A^{-\beta}}(V)$ with $b > 0$ and $\beta > d/2$ is a regular conditional distribution of the data $y$, given $u$.
	Then a regular conditional distribution $(y,B) \mapsto \mu^y(B)$ of $u$, given $y$, exists, the posterior distribution $\mu^y$ is absolutely continuous with respect to the prior distribution $\Normal_{r^2 A^{-\tau}}$ for every $y \in X$ and has the density
	\begin{equation} 
		\label{eq:post_dens}
		\frac{\di\mu^y}{\di\Normal_{r^2 A^{-\tau}}}(u)
		= \frac{1}{Z(y)} \exp(-\Phi(u,y)),
	\end{equation}
	where $\Phi$ is given by \eqref{eq:def_phi} and
	\begin{equation*}
		Z(y) = \int_X \exp(-\Phi(u,y)) \Normal_{r^2 A^{-\tau}}(\di u).
	\end{equation*}
\end{theorem}
\begin{proof}[Proof of \cref{post_dist_lin_prob}]
	By \cref{thmLaplaceAlt,lemSolEst}, the measure $P_{u} := \Laplace_{e^{-A}u, b^2 A^{-\beta}}$ is absolutely continuous with respect to $\nu := \Laplace_{b^2 A^{-\beta}}$ for all $u \in X$ with the density $y \mapsto p_{u}(y) := \exp(-\Phi(u,y))$. 
	The function $(u,y) \mapsto p_u(y)$ is measurable by \cref{prop:phi_cont} and $Z(y) > 0$ for all $y \in X$ by \cref{prop:Z_pos_fin}.
	Therefore, we may apply Bayes' theorem \cite[Thm.~1.3]{Kretschmann:2019}, which yields the proposition.
\end{proof}

\Cref{post_dist_lin_prob} shows that $\mu^y$ satisfies \cref{ass:post_dist}, and \cref{prop:phi_cont} shows that $\Phi$ satisfies \cref{ass:phi}.
The posterior distribution is stable with respect to changes in the data, that is, small changes in $y$ result in small changes in the $\mu^y$ measured in Hellinger distance, see Theorem 5.30 in \cite{Kretschmann:2019}. Introducing a Gaussian prior has a stabilising effect on the considered inverse problem.

\subsection{Maximum a posteriori estimator}

Next, we use the variational characterization from \cref{var_char_modes} to find the modes of the posterior distribution $\mu^y$ as minimizers of its Onsager--Machlup functional.
By the Cameron--Martin theorem \cite[Thm.~2.8]{DaPrato:2006}, the Cameron--Martin space of the prior distribution $\Normal_{r^2 A^{-\tau}}$ is given by $\X{\tau} = \ran(A^{-\tau/2})$, and its Cameron--Martin norm by $\frac{1}{r}\norm{h}_\X{\tau} = \frac{1}{r}\norm{A^{\tau/2}h}_X$.

\begin{theorem}
	\label{explicit_minimizer}
	For every $y \in \X{s}$, the functional $I^y: \X{\tau} \to \R$, defined by
	\begin{equation}
		\label{eq:omf_heat}
		\begin{aligned}
			I_y(u) &= \Phi(u,y) + \frac{1}{2r^2}\norm{u}_\X{\tau}^2 \\
			&= \frac{\sqrt{2}}{b} \sum_{k=1}^\infty \frac{\abs{\scalprod{y,\phi_k}_X - e^{-\alpha_k}\scalprod{u,\phi_k}_X} - \abs{\scalprod{y,\phi_k}_X}}{\alpha_k^{-\beta/2}} + \frac{1}{2r^2} \sum_{k=1}^\infty \frac{\abs{\scalprod{u,\phi_k}_X}^2}{\alpha_k^{-\tau}},
		\end{aligned}
	\end{equation}
	is an Onsager--Machlup functional of $\mu^y$ as given in \eqref{eq:post_dens}. It has a unique minimizer $\bar{u} = \bar{u}(y) \in \X{\tau}$ that is the only mode of $\mu^y$ and is given by
	\begin{equation}
		\label{eq:explicit_minimizer}
		\bar{u} = \sum_{k=1}^\infty e^{\alpha_k} P_{\big[-\frac{r^2}{b}R_k,\frac{r^2}{b}R_k\big]} \Big(\scalprod{y,\varphi_k}_X\Big) \varphi_k,
	\end{equation}
	where
	\[ R_k := \sqrt{2}\alpha_k^{\frac{\beta}{2} - \tau}e^{-2\alpha_k} \quad \text{for all}~k \in \N, \]
	and
	\[ P_I(x) := \argmin_{z\in I} \abs{z - x} \]
	denotes the projection of $x \in \R$ onto a closed interval $I \subseteq \R$.
\end{theorem}
\begin{proof}
	Let $y \in X$. $\Phi$ is Lipschitz continuous in $u$ by \cref{prop:phi_cont}, so $u \mapsto \Phi(u,y) + \frac{1}{2r^2}\norm{u}_\X{\tau}^2$ is an Onsager--Machlup functional of $\mu^y$ by \cref{form_OM_funct}.
	Now, \cref{var_char_modes} tells us that $I^y$ has a minimizer $\bar u \in \X{\tau}$ and that the minimizers of $I^y$ are precisely the modes of $\mu^y$, again due to the Lipschitz continuity of $\Phi$ in $u$.
	A straightforward computation shows that the minimizer of $I^y$ is unique and given by \eqref{eq:explicit_minimizer}, see Lemma 5.34 in \cite{Kretschmann:2019}.
\end{proof}

\Cref{explicit_minimizer} tells us that for the considered inverse problem, a regularized solution found by minimizing $I^y$ as defined in \eqref{eq:omf_heat} does indeed describe a point of maximal posterior probability both in the sense of a small ball mode and a minimizer of the Onsager--Machlup functional.
In particular, \cref{explicit_minimizer} guarantees that for every $y \in X$, the posterior distribution $\mu^y$ has a unique mode. This allows us to define the \emph{maximum a posteriori estimator} $\umap$: $X \to X$ by assigning to each $y \in X$ the mode $\umap(y)$ of $\mu^y$, and to express it as
\begin{equation}
	\label{eq:def_map_estimator}
	\umap(y) = \argmin_{u \in \X{\tau}} I^y(u) = \argmin_{u \in \X{\tau}} \left\{ \Phi(u,y) + \frac{1}{2r^2}\norm{u}_\X{\tau}^2 \right\}.
\end{equation}
Minimizing $I^y$ corresponds to Tikhonov--Phillips regularization with an $\ell^1$-like discrepancy term and a quadratic penalty term, where the inverse prior variance plays the role of the regularization parameter.
We can interpret expression \eqref{eq:explicit_minimizer} for $\umap(y)$ in the following way: The MAP estimator acts upon the data by projecting it onto a hyperrectangle 
	\[ \left\{ y \in X: \abs{\scalprod{y, \varphi_k}_X} \le \frac{r^2}{b}R_k \text{ for all }k \in \N \right\} \]
and then applying the inverse $e^A$ of the forward operator.
It can be shown that the MAP estimator $\umap$ is continuous, see Theorem 5.37 in \cite{Kretschmann:2019}. That is, the MAP estimate is stable with respect to changes in the data.

\subsection{Consistency of the MAP estimator}

Finally, we show consistency of the MAP estimator as the noise level tends to zero under the frequentist assumption that the data is generated by a deterministic true solution $u^\dagger \in X$, that is, when
\[ y = e^{-A}u^\dagger + \eta \sim \Laplace_{e^{-A}u^\dagger, b^2 A^{-\beta}}. \]
Here, the value of our main result \cref{var_char_modes} lies in allowing us to translate the consistency of the minimizer of $I^y$ into consistency of the posterior mode.
We show that under a source condition on $u^\dagger$ and with an a priori choice of the prior variance $r^2$ in the order of the noise level $b$, the mean squared error (MSE) of the MAP estimator converges in the order of the noise level.

\begin{theorem} 
	\label{thmConvRateVar}
	Let $u^\dagger \in X$ and $r > 0$ and assume that $y \sim \Laplace_{e^{-A}u^\dagger, b^2 A^{-\beta}}$ with $b > 0$, $\beta > d/2$ and $A$ as defined in \eqref{eq:def_A}. 
	If there exist $w \in X$ and $\rho > 0$ such that
	\begin{equation}
		\label{eq:source_cond}
		u^\dagger = A^{\frac{\beta}{2} - \tau}e^{-A}w 
		\quad \text{and} \quad \sup_{k\in\N} \abs{\scalprod{w,\varphi_k}_X} \le \rho,
	\end{equation}
	and if there exists $C > 0$ such that
		\[ \frac{\rho}{\sqrt{2}} b \le r^2 \le C b, \]
	then 
		\[ \E\left[ \norm{\umap(y) - u^\dagger}_{X}^2 \right] \le 2C \left(\Tr A^{-\tau}\right) b \]
	for $\umap(y)$ given by \eqref{eq:def_map_estimator}.
\end{theorem}

\begin{proof}
Since the components of $\umap$ are independent by \cref{explicit_minimizer}, we have
\begin{equation*}
	\E\left[ \norm{\umap - u^\dagger}_{X}^2 \right]
	= \E\left[ \sum_{k=1}^\infty \left|\scalprod{\umap - u^\dagger, \varphi_k}_{X}\right|^2 \right]
	= \sum_{k=1}^\infty \E\left[ \left|\scalprod{\umap - u^\dagger, \varphi_k}_{X}\right|^2 \right].
\end{equation*}
By Lemma 5.49 in \cite{Kretschmann:2019}, the componentwise mean squared error is given by
\begin{equation}
	\label{eq:componentwise_MSE}
	\begin{aligned}
		&\E\left[\left|\scalprod{\umap(y) - u^\dagger, \varphi_k}_X\right|^2\right] \\
		&= \frac{b^2}{c_k^2} f\left(\frac{c_k}{b} \left| \frac{r^2}{b}c_k\alpha_k^{-\tau} + \left|\scalprod{u^\dagger, \varphi_k}_{X}\right| \right| \right)
		+ \frac{b^2}{c_k^2} f\left(\frac{c_k}{b} \left| \frac{r^2}{b}c_k\alpha_k^{-\tau} - \left|\scalprod{u^\dagger, \varphi_k}_{X}\right|\right| \right) \\
		&\qquad+ \mathbf{1}_{\left[-\frac{r^2}{b}c_k\alpha_k^{-\tau},\frac{r^2}{b}c_k\alpha_k^{-\tau}\right]} \left(\scalprod{u^\dagger,\varphi_k}_X\right)
		\frac{b^2}{c_k^2} g\left( \frac{c_k}{b} \left| \frac{r^2}{b}c_k\alpha_k^{-\tau} - \left|\scalprod{u^\dagger, \varphi_k}_{X}\right|\right| \right)
	\end{aligned}
\end{equation}
for all $k \in \N$, where $c_k := \sqrt{2}\alpha_k^{\beta/2}e^{-\alpha_k}$, $f(t) := 1 - e^{-t} - t e^{-t}$ and $g(t) := t^2 - 2f(t)$ for all $t \ge 0$. 
The conditions on $u^\dagger$ and $r$ ensure
\begin{align*}
	\left|\scalprod{u^\dagger, \varphi_k}_{X}\right|
	&= \left|\left(A^{\frac{\beta}{2} - \tau}e^{-A}w, \varphi_k\right)_{X}\right|
	= \left|\left(w, e^{-A}A^{\frac{\beta}{2} - \tau}\varphi_k\right)_{X}\right| \\
	&= e^{-\alpha_k}\alpha_k^{\frac{\beta}{2} - \tau} \left|\left(w,\varphi_k\right)_{X}\right|
	\le \alpha_k^{\frac{\beta}{2} - \tau}e^{-\alpha_k} \rho
	\le \frac{r^2}{b}\sqrt{2}\alpha_k^{\frac{\beta}{2} - \tau}e^{-\alpha_k}
\end{align*}
for all $k \in \N$ and $n \ge N$.
Thus, the last term on the right hand side of \eqref{eq:componentwise_MSE} is equal to zero.
We use the estimate 
	\[ f(t) \le 1 - e^{-t} \le t, \]
that holds for all $t \ge 0$, to obtain
	\[ \E\left[\left|\scalprod{\umap - u^\dagger, \varphi_k}_{X}\right|^2\right] \le 2 r^2 \alpha_k^{-\tau}. \]
Consequently, we have
	\[ \E\left[ \norm{\umap - u^\dagger}_{X}^2 \right] \le 2 r^2 \sum_{k=1}^\infty \alpha_k^{-\tau} = 2 \left(\Tr A^{-\tau}\right) r^2 \le 2C \left(\Tr A^{-\tau}\right) b \]
by the choice of $r$.
\end{proof}

\Cref{thmConvRateVar} shows in particular that with the stated choice of $r$, the MAP estimator is consistent, since its convergence toward the true solution in mean square implies convergence in probability by Markov's inequality.
If we sharpen the condition that $\sup_{k\in\N} \abs{\scalprod{w,\phi_k}_X} \le \rho$ to $\norm{w}_X \le \rho$, set $d = 2$ and are able to choose $\tau = \beta/2$, then \eqref{eq:source_cond} corresponds exactly to an analytic source condition. For $\tau < \beta/2$, it is weaker than an analytic source condition, whereas for $\tau > \beta/2$, it is stronger.
In numerical simulations, the MAP estimator has been observed to not converge to the truth $u^\dagger$ in the small noise limit if only a Sobolev-type source condition is fulfilled, see subsection 6.5.2 of \cite{Kretschmann:2019}, which suggests that a source condition of type \eqref{eq:source_cond} is necessary for convergence.

We compare the rate of convergence stated in \cref{thmConvRateVar} with the convergence rate of the minimax risk when the Laplacian noise is replaced by Gaussian noise $\eta \sim \Normal_{b^2 A^{-\beta}}$. 
For an introduction to the minimax paradigm and an overview over minimax rates of linear inverse problems with different degrees of ill-posedness and under different source conditions, see \cite{Cavalier_2008}. 
The minimax rate of the MSE for severely ill-posed problems under an analytic source condition is linear in the noise level $b$, see Table 1 in \cite{Cavalier_2008}, and it is shown that the MAP estimator achieves this rate in Proposition 10 of \cite{aga_mat_2018}. In our setting, this rate does, however, not apply because the noise is not white and the source condition is in general not exactly analytic.

We restrict ourselves to the case that the inverse problem is exponentially ill-posed, that is, we set $d = 2$, and assume that the eigenvalues of $A$ associated with the eigenvectors $\varphi_k$ are exactly $\alpha_k = pk$ for some $p > 0$.
We moreover replace the source condition \eqref{eq:source_cond} by the slightly stronger condition that the true solution lies in the class
\[ \Theta := \left\{ A^{\frac{\beta}{2} - \tau}e^{-A}w: w \in X, \norm{w}_X \le \rho \right\}. \]
This setting was considered in \cite{Tsybakov_2000} and corresponds to a choice of $\epsilon = b$,
\begin{equation*}
	b_k = k^\frac{\beta}{2} e^{-pk} \quad\text{and}\quad	
	a_k = k^{\tau - \frac{\beta}{2}} e^{pk} \quad \text{for all}~k \in \N
\end{equation*}
for the sequences $(b_k)_{k\in\N}$ and $(a_k)_{k\in\N}$ describing the model and the source condition.
Now, Theorem 1 in \cite{Tsybakov_2000} yields convergence of the MSE minimax risk
\[ \inf_{\hat{u}} \sup_{u^\dagger \in \Theta} \E\left[\norm{\hat{u} - u^\dagger}_X^2\right] \]
in the order of $(\log \frac{1}{b})^{-\tau} b$, where the infimum is taken over all estimators $\hat{u} = \hat{u}(y)$.
We observe that in the considered case, the convergence rate estimate for Laplacian noise from \cref{thmConvRateVar} is worse than the minimax rate for Gaussian noise by a logarithmic factor. The question remains whether the minimax rate is actually worse in this case or if the estimate is simply not sharp.


\section*{Conclusion}

We saw that under mild assumptions on the likelihood and in a separable Hilbert space setting, modes of a Bayesian posterior distribution based upon a Gaussian prior do indeed coincide with minimizers of its Onsager--Machlup functional and thus also with its weak modes. Under appropriate conditions on the forward mapping, our assumptions on the likelihood are in particular satisfied for Bayesian inverse problems with infinite-dimensional additive Gaussian or Laplacian noise. The proof of our main result fills gaps present in previous proofs and constitutes the first rigorous proof in the separable Hilbert space setting that covers the fundamental limit case of inverse problems with Gaussian process noise. 
Our work shows that in the considered cases, nonparametric MAP estimation and Tikhonov--Phillips regularization are equivalent and links the choice of the discrepancy term to the log-likelihood.
It moreover shows that weak and strong MAP estimates coincide in these cases.
In the considered example of a severely ill-posed linear problem with Laplacian noise, the variational characterization of modes allowed us to express the MAP estimator explicitly and bound the rate of convergence of its mean squared error.

Our results pave the way for the study of nonparametric MAP estimates for inverse problems with other non-Gaussian noise models.

\section*{Acknowledgments}

I want to thank Frank Werner and Christian Clason for their valuable feedback and fruitful discussions.
I furthermore want to thank the referees and editors for their comments and suggestions, which have helped improve this work significantly.


\appendix

\section{Proofs of section \ref{sect_var_char}}
\label{sect_proofs_var_char}

We first show that \cref{ass:phi} implies conditions on the likelihood used in \cite{Dashti:2013}.
\begin{lemma}
	\label{cond_Dashti_hold}
	If $\Phi$ satisfies \cref{ass:phi}, then it has the following three properties.
	\begin{enumerate}
		\item For every $\epsilon > 0$, there exists $M = M(\varepsilon) \in \R$, such that for all $x \in X$,
			\[ \Phi(x) \ge M - \epsilon\norm{x}_X^2. \]
		\item $\Phi$ is bounded from above on bounded sets, i.e., for every $r > 0$, there exists $K = K(r) > 0$, such that $\Phi(x) \le K$ for all $x \in B_r(0)$.
		\item $\Phi$ is Lipschitz continuous on bounded sets, i.e., for every $r > 0$, there exists $L = L(r) > 0$, such that for all $x_1, x_2 \in B_r(0)$, we have
			\[ \abs{\Phi(x_1) - \Phi(x_2)} \le L \norm{x_1 - x_2}_X. \]
	\end{enumerate}
\end{lemma}
\begin{proof}
	We verify that these properties hold.
	Property (iii) is trivially satisfied.
	Property (ii) is satisfied with $K := \max \{\Phi(0), 0\} + L(r)r$ by the Lipschitz continuity of $\Phi$ on bounded sets, as
	\[ \Phi(x) \le \Phi(0) + L\norm{x}_X \le \Phi(0) + L(r)r \le K \]
	for all $x \in B_r(0)$.
	By \cref{ass:phi} \ref{ass:phi_lower_bound} we moreover have
	\begin{equation*}
		\Phi(x) + \varepsilon\left\|x\right\|_X^2
		\ge \Phi(0) - \underline{L}\norm{x}_X + \varepsilon\norm{x}_X^2
		= \Phi(0) + \varepsilon \left(\left\|x\right\|_X - \frac{\underline{L}}{\varepsilon}\right)\norm{x}_X
	\end{equation*}
	for all $\varepsilon > 0$ and $x \in X$.
	Now, the minimum of the function $f(t) = \varepsilon(t - \frac{\underline{L}}{\varepsilon})t$ on $\R$ is attained in $\frac{\underline{L}}{2\varepsilon}$, so that for given $\varepsilon > 0$, property (i) is satisfied with
	\begin{equation*}
		M := \Phi(0) + f\left(\frac{\underline{L}}{2\varepsilon}\right) = \Phi(0) - \frac{\underline{L}^2}{4\varepsilon}.
	\end{equation*}
\end{proof}

\begin{proof}[Proof of \cref{form_OM_funct}]
	By \eqref{post_dist} and the continuity of $\Phi$, $\mu_0$ and $\mu^y$ are equivalent and thus have the same space $E = \ran(Q^{1/2})$ of admissible shifts, see \cref{OM_funct_gauss}.
	By Theorem 3.2 in \cite{Dashti:2013}, the Onsager--Machlup functional $I$ of $\mu^y$ is given by \eqref{def_I} if $\Phi$ satisfies properties (i) to (iii) of \cref{cond_Dashti_hold}, which is the case by \cref{cond_Dashti_hold}.
\end{proof}

\begin{proof}[Proof of \cref{min_I_ex}]
	By Theorem 5.4 in \cite{stuart_2010}, $I$ has a minimizer in $E$ if $\Phi$ satisfies conditions (i) and (iii) of \cref{cond_Dashti_hold}, which is the case by \cref{cond_Dashti_hold}.
\end{proof}

\begin{proof}[Proof of \cref{weak_strong_modes}]
	By Lemma 4.5 in \cite{klebanov2023}, we have
	\[ \limsup_{\delta \to 0} \frac{\mu_0(B_\delta(z))}{\mu_0(B_\delta(0))} = 0 \]
	for any $z \in X \setminus E$, because the separable Hilbert space $X$ is isomorphic to $\ell^2$. That is, $(\mu_0,E)$ has the $M$-property. 
	$\Phi$ is bounded on bounded subsets by \cref{ass:phi} \ref{ass:phi_loc_lip}.
	This allows us to apply Lemma B.1 (b) in \cite{Ayanbayev_2021a}, which yields that $(\mu^y,E)$ has the $M$-property as well.
	By Proposition 4.1 in \cite{Ayanbayev_2021a}, $x \in X$ thus is a weak mode of $\mu^y$ if and only if it minimizes its Onsager--Machlup functional $I$. We know, however, from \cref{var_char_modes} that the minimizers of $I$ are precisely the strong modes of $\mu^y$.
\end{proof}


\section{Proofs of section \ref{sect_ip}}
\label{sect_proofs_ip}

\begin{proof}[Proof of \cref{cond_gauss}]
	A straightforward computation yields
	\begin{equation}
	\label{gauss_diff}
	\begin{split}
		\Phi(u_1,y) - \Phi(u_2,y) &= \frac12\norm{Q^{-\frac12}F(u_1) - Q^{-\frac12}F(u_2)}_Y^2 \\
		&\quad+ \scalprod{Q^{-1}F(u_1) - Q^{-1}F(u_2), F(u_2) - y}_Y \\
		&= \frac12\scalprod{Q^{-1}F(u_1) - Q^{-1}F(u_2), F(u_1) - F(u_2)}_Y \\
		&\quad+ \scalprod{Q^{-1}F(u_1) - Q^{-1}F(u_2), F(u_2) - F(0)}_Y \\
		&\quad+ \scalprod{Q^{-1}F(u_1) - Q^{-1}F(u_2), F(0) - y}_Y
	\end{split}
	\end{equation}
	for all $u_1,u_2 \in X$.
	By the boundedness of $Q$, $F$ is Lipschitz continuous on bounded sets as well, so that
	\begin{align*}
		\abs{\Phi(u_1,y) - \Phi(u_2,y)} &\le \frac{L_1 L_2}{2}\norm{u_1 - u_2}_X^2 + L_1 L_2 \norm{u_1 - u_2}_X \norm{u_2}_X \\
		&\quad+ L_1 \norm{u_1 - u_2}_X \norm{F(0) - y}_Y \\
		&\le L_1\norm{u_1 - u_2}_X \left(L_2 r + L_2 r + \norm{F(0) - y}_Y\right)
	\end{align*}
	for all $u_1,u_2 \in B_r(0)$, where $L_1$ denotes the Lipschitz constant of $Q^{-1} \circ F$ on $B_r(0)$, and $L_2$ that of $F$ on $B_r(0)$.
	This proves \cref{ass:phi} \ref{ass:phi_loc_lip}.

	Setting $u_1 = u$ and $u_2 = 0$ in \eqref{gauss_diff} moreover yields
	\[ \Phi(u,y) - \Phi(0,y) = \frac12\norm{Q^{-\frac12}F(u) - Q^{-\frac12}F(0)}_Y^2 + \scalprod{Q^{-1}F(u) - Q^{-1}F(0), F(0) - y}_Y \]
	for all $u \in X$.
	By assumption, we thus have
	\[ \Phi(u,y) - \Phi(0,y) \ge 0 - \norm{Q^{-1}F(u) - Q^{-1}F(0)}_Y \norm{F(0) - y}_Y \ge -L_0 \norm{u}_X \norm{F(0) - y}_Y \]
	for all $u \in X$.
	This proves \cref{ass:phi} \ref{ass:phi_lower_bound}.
\end{proof}

\begin{proof}[Proof of \cref{cond_white_gauss_lin}]
	As a bounded linear operator on $X$, $Q^{-1}A$ is Lipschitz continuous. Therefore, it is Lipschitz continuous on bounded sets and satisfies condition \eqref{Lip0_cond}. Now it follows from \cref{cond_gauss} that $\Phi(u,y) = \Psi(Au,y)$ satisfies \cref{ass:phi}.
\end{proof}

\begin{proof}[Proof of \cref{cond_white_gauss}]
	Similarly as in the proof of \cref{cond_gauss}, we compute
	\begin{equation}
	\label{gauss_white_diff}
	\begin{split}
		\Phi(u_1,y) - \Phi(u_2,y) &= \frac{1}{2\sigma^2}\norm{F(u_1) - F(u_2)}_Y^2
		+ \frac{1}{\sigma^2}\dualpair{F(u_2) - y, F(u_1) - F(u_2)}_{V' \times V} \\
		&= \frac{1}{2\sigma^2}\norm{F(u_1) - F(u_2)}_Y^2
		+ \frac{1}{\sigma^2}\scalprod{F(u_2) - F(0), F(u_1) - F(u_2)}_Y \\
		&\quad+ \frac{1}{\sigma^2}\dualpair{F(0) - y, F(u_1) - F(u_2)}_{V' \times V}
	\end{split}
	\end{equation}
	for all $u_1,u_2 \in X$, where we identified $F(u_2) \in V$ with $\scalprod{F(u_2),\cdot}_Y \in V'$.
	By assumption, $Q^{-1/2} \circ F$ is Lipschitz continuous on bounded sets.
	Therefore, $F$ is Lipschitz contiuous on bounded sets as well by the boundedness of $Q^{1/2}$. We obtain that
	\begin{align*}
		\abs{\Phi(u_1,y) - \Phi(u_2,y)} &\le \frac{L_2^2}{2\sigma^2}\norm{u_1 - u_2}_X^2 + \frac{L_2^2}{\sigma^2} \norm{u_2}_X \norm{u_1 - u_2}_X
		+ \frac{1}{\sigma^2} \norm{F(0) - y}_{V'} \norm{u_1 - u_2}_V \\
		&\le \left(L_2^2 r + L_2^2 r + L_1 \norm{F(0) - y}_{V'}\right) \norm{u_1 - u_2}_X
	\end{align*}
	for all $u_1,u_2 \in B_r(0)$, where $L_1$ denotes the Lipschitz constant of $Q^{-1/2} \circ F$ on $B_r(0)$, and $L_2$ that of $F$ on $B_r(0)$.
	This proves \cref{ass:phi} \ref{ass:phi_loc_lip}.

	Setting $u_1 = u$ and $u_2 = 0$ in \eqref{gauss_white_diff} moreover yields
	\[ \Phi(u,y) - \Phi(0,y) = \frac{1}{2\sigma^2}\norm{F(u) - F(0)}_Y^2 + \dualpair{F(0) - y, F(u) - F(0)}_{V' \times V} \]
	for all $u \in X$.
	By assumption, we thus have
	\[ \Phi(u,y) - \Phi(0,y) \ge 0 - \norm{F(0) - y}_{V'} \norm{F(u) - F(0)}_V \ge -L_0 \norm{F(0) - y}_{V'} \norm{u}_X \]
	for all $u \in X$.
	This proves \cref{ass:phi} \ref{ass:phi_lower_bound}.
\end{proof}

\begin{proof}[Proof of \cref{cond_white_gauss_lin}]
	By the assumptions on $A$, $Q := AA^*$ is a self-adjoint, positive, trace class operator on $Y$ with $\ran(Q^{1/2}) = \ran((AA^*)^{1/2}) = \ran(A)$. Moreover, $Q^{-1/2}A$ is bounded since 
	\[ \norm{Q^{-1/2}Ax}_Y = \norm{(AA^*)^{-1/2}Ax}_Y = \norm{x}_X \quad \text{for all}~x \in X. \]
	Now it follows from \cref{cond_white_gauss} that $\Phi$ satisfies \cref{ass:phi}, cf.~\cref{rem:cond_white_gauss_lin}.
\end{proof}

\begin{proof}[Proof of \cref{xi_well_def}]
We prove that the series $\sum_{k=1}^\infty \abs{\xi_k}^2$
converges almost surely using Kolmogorov's three series theorem with $A > 0$, $X_k := \abs{\xi_k}^2$ and $Y_k := X_n\mathbf{1}_{\abs{X_n} \le A^2}$ for all $k \in \N$.
Without loss of generality, we may consider the case $a = 0$, since convergence of $\sum_{k=1}^\infty \abs{\xi_k}^2$ with $\xi_k \sim \Laplace_{\lambda_k}$ for all $k \in \N$ implies convergence of $\sum_{k=1}^\infty \abs{\tilde{\xi}_k}^2 = \norm{a}_X^2 + \sum_{k=1}^\infty \abs{\xi_k}^2$ with $\tilde{\xi}_k \sim \Laplace_{\scalprod{a,e_k}_X,\lambda_k}$ for all $k \in \N$.

First of all, we have
\begin{equation*}
	\Prob{\abs{X_k} > A^2} = \Prob{\abs{\xi_k}^2 > A^2} = 2 \Prob{\xi > A} \\
	= \frac{2}{\sqrt{2\lambda_k}} \int_A^\infty e^{-\frac{\sqrt{2}x}{\sqrt{\lambda_k}}} \di x
	= e^{-\frac{\sqrt{2}A}{\sqrt{\lambda_k}}}.
\end{equation*}
Since $t^2e^{-t} \to 0$ as $t \to \infty$ and $\lambda_k \to 0$ as $k \to \infty$, this implies
\[ \sum_{k=1}^\infty \Prob{\abs{X_k} > A^2} \le C_1 \sum_{k=1}^\infty \lambda_k = C_1 \Tr Q < \infty. \]

Second of all, we have
\[ 0 \le \sum_{k=1}^\infty \Exp{Y_k} \le \sum_{k=1}^\infty \Exp{X_k} = \sum_{k=1}^\infty \Exp{\abs{\xi_k}^2} = \sum_{k=1}^\infty \lambda_k = \Tr Q < \infty. \]

Last of all, we have
\[ \Exp{\abs{\xi_k}^4} = \sqrt{\frac{2}{\lambda_k}} \int_0^\infty x^4 e^{-\sqrt{\frac{2}{\lambda_k}}x} \di x
	= \frac{\lambda_k^2}{4} \int_0^\infty t^4 e^{-t} \di t = \frac{\lambda_k^2}{4} 4! = 6\lambda_k^2. \]
This yields
\begin{align*}
	\Var(Y_k) &\le \Var(X_k) = \Exp{\left(\abs{\xi_k}^2 - \Exp{\abs{\xi_k}^2}\right)^2} 
	= \Exp{\abs{\xi_k}^4 - 2\abs{\xi_k}^2\Exp{\abs{\xi_k}^2} + \Exp{\abs{\xi_k}^2}^2} \\
	&= \Exp{\abs{\xi_k}^4} - \Exp{\abs{\xi_k}^2}^2 = 6\lambda_k^2 - \lambda_k^2 = 5\lambda_k^2,
\end{align*}
and in turn
\[ \sum_{k=1}^\infty \Var{Y_k} \le 5 \sum_{k=1}^\infty \lambda_k^2 \le C_2 \sum_{k=1}^\infty \lambda_k = C_2 \Tr Q < \infty. \]

Now, $\norm{\xi}_X^2 = \norm{(\xi_k)_{k\in\N}}_{\ell^2}^2 = \sum_{k=1}^\infty \abs{\xi_k}^2$ is finite almost surely by Kolmogorov's three series theorem \cite[Thm.~15.51]{Klenke:2020}.

The random sequence $(\xi_k)_{k\in\N}$ is measurable with respect to the Borel $\sigma$-algebra $\Borel(\R^\infty)$ induced by the product topology on $\R^\infty$, see section 3.4 in \cite{Kretschmann:2019}, and almost surely takes values in $\ell^2$ according to the previous considerations. By Proposition 3.5 in \cite{Kretschmann:2019}, it is therefore $\Borel(\ell^2)$-measurable. Now, $\xi$ is $\Borel(X)$-measurable by Theorems 1.80 and 1.88 in \cite{Klenke:2020} as the composition of a measurable mapping and the isomorphism $(x_k)_{k\in\N} \mapsto \sum_{k=1}^\infty x_k e_k$ between $\ell^2$ and $X$.
\end{proof}

\begin{proof}[Proof of \cref{cond_nonlin_op}]
By the triangle inequality, we have
\begin{align*}
	\abs{\Psi(F(x_1),z) - \Psi(F(x_2),z)}
	&\le \sqrt{2} \sum_{k=1}^\infty \lambda_k^{-\frac12} \Big\lvert\abs{\scalprod{y,e_k}_Y - \scalprod{F(x_1),e_k}_Y} - \abs{\scalprod{y,e_k}_Y - \scalprod{F(x_2),e_k}_Y}\Big\rvert \\
	&\le \sqrt{2} \sum_{k=1}^\infty \lambda_k^{-\frac12} \abs{\scalprod{F(x_1) - F(x_2), e_k}_Y}
\end{align*}
for all $x_1, x_2 \in X$. If condition (i) holds, then the proof is finished.
If, on the other hand, condition (ii) holds, we estimate
\begin{align*}
	\abs{\scalprod{F(x_1) - F(x_2), e_k}_Y} &= \bigabs{\bigscalprod{\int_0^1 F'(x_2 + t(x_1 - x_2))(x_1 - x_2) \di t, e_k}_X} \\
	&\le \int_0^1 \abs{\scalprod{x_1 - x_2, F'(x_2 + t(x_1 - x_2))^*e_k}_X} \di t \\
	&\le \norm{x_1 - x_2}_X \int_0^1 \norm{F'(x_2 + t(x_1 - x_2))^*e_k}_X \di t
\end{align*}
for all $x_1, x_2 \in X$ using the Fréchet differentiability of $F$.
Now, condition \eqref{cond_F} implies that
\begin{align*}
	\abs{\Psi(F(x_1),z) - \Psi(F(x_2),z)}
	&\le \bigg(\sqrt{2} \int_0^1 \sum_{k=1}^\infty \lambda_k^{-\frac12} \norm{F'(x_2 + t(x_1 - x_2))^*e_k}_X \di t\bigg) \norm{x_1 - x_2}_X \\
	&\le \sqrt{2}C\norm{x_1 - x_2}_X
\end{align*}
for all $x_1, x_2 \in X$.
\end{proof}

\begin{proof}[Proof of \cref{cond_lin_op}]
Let $(\kappa_k,f_k,e_k)_{k\in\N}$ denote a singular system of $K$.
By the triangle inequality and the Cauchy--Schwarz inequality, we have
\begin{align*}
	\abs{\Psi(Kx_1,z) - \Psi(Kx_2,z)} &\le \sqrt{2} \sum_{k=1}^\infty \lambda_k^{-\frac12} \Big\lvert\abs{\scalprod{y,e_k}_Y - \scalprod{Kx_1,e_k}_Y} - \abs{\scalprod{y,e_k}_Y - \scalprod{Kx_2,e_k}_Y}\Big\rvert \\
	&\le \sqrt{2} \sum_{k=1}^\infty \lambda_k^{-\frac12} \abs{\scalprod{x_1 - x_2,K^* e_k}_X}
	= \sqrt{2} \sum_{k=1}^\infty \frac{\kappa_k}{\lambda_k^{1/2}} \abs{\scalprod{x_1 - x_2,f_k}_X} \\
	&\le \sqrt{2} \left(\sum_{k=1}^\infty \frac{\kappa_k^2}{\lambda_k}\right)^\frac12 \left(\sum_{k=1}^\infty \abs{\scalprod{x_1 - x_2,f_k}_X}^2\right)^\frac12
\end{align*}
for all $x_1,x_2 \in X$.
Since $(f_k)_{k\in\N}$ is an orthonormal system in $X$ and $(e_k)_{k\in\N}$ an orthonormal basis of $Y$, it follows that
\begin{align*}
	\abs{\Psi(Kx_1,z) - \Psi(Kx_2,z)} &\le \sqrt{2} \left(\sum_{k=1}^\infty \bignorm{K^*Q^{-\frac12}e_k}_X^2\right)^\frac12 \norm{x_1 - x_2}_X \\
	&= \sqrt{2} \bignorm{K^*Q^{-\frac12}}_\text{HS} \norm{x_1 - x_2}_X
	= \sqrt{2} \bignorm{Q^{-\frac12}K}_\text{HS} \norm{x_1 - x_2}_X
\end{align*}
for all $x_1,x_2 \in X$.
\end{proof}


\section{Proofs of section \ref{sect_lin_prob}}
\label{sect_proofs_lin_prob}

\begin{proof}[Proof of \cref{lemSolEst}]
	First, we consider
	\begin{equation*}
		\sum_{k=1}^\infty \alpha_k^s \abs{\scalprod{e^{-tA}u, \varphi_k}_X}^2
		= t^{-s} \sum_{k=1}^\infty (\alpha_k t)^s e^{-2\alpha_k t} \abs{\scalprod{u,\varphi_k}_X}^2.
	\end{equation*}
	By \cite[Lem.~5.14]{Kretschmann:2019}, the sequence $(\alpha_k t)^s e^{-2\alpha_k t}$ is bounded from above by $C := s^s e^{-s}$, so that
	\begin{equation*}
		\sum_{k=1}^\infty \alpha_k^s \abs{\scalprod{e^{-tA}u, \varphi_k}_X}^2
		\le t^{-s} C \norm{u}_X^2 < \infty.
	\end{equation*}
	This implies $e^{-tA}u \in \X{s}$ and proves the estimate.
\end{proof}

\begin{proof}[Proof of \cref{prop:phi_cont}]
	We show that $\Phi(\cdot,y)$ is Lipschitz continuous by applying \cref{cond_lin_op} with $K = e^{-A}$ and $Q = b^2A^{-\beta}$.
	Here, we have
	\[ \bignorm{K^*Q^{-\frac12}}_\text{HS}^2 = \sum_{k=1}^\infty \bignorm{K^*Q^{-\frac12}e_k}_X^2 = \frac{1}{b^2} \sum_{k=1}^\infty e^{-2\alpha_k} \alpha_k^{\beta} = \frac{1}{b^2} \sum_{k=1}^\infty \left(e^{-\alpha_k} \alpha_k^{\beta}\right)^2 \alpha_k^{-\beta}. \]
	The sequence $(\alpha_k^\beta e^{-\alpha_k})_{k\in\N}$ is bounded from above by $\beta^\beta e^{-\beta}$, see \cite[Lem.~5.14]{Kretschmann:2019}, and the operator $A^{-\beta}$ is trace class according to \eqref{eq:pow_A_trace_class}.
	This leads to the estimate
	\[ \bignorm{K^*Q^{-\frac12}}_\text{HS}^2 \le \frac{1}{b^2} \left(\beta^\beta e^{-\beta}\right)^2 \Tr A^{-\beta}, \]
	which shows that the assumptions of \cref{cond_lin_op} are satisfied.

	Now we show the continuity in $y$.
	Let $u \in X$ and $\varepsilon > 0$. Here, we estimate
	\begin{align*}
		\abs{\Phi(u,y) - \Phi(u,z)}
		&= \left| \frac{\sqrt{2}}{b} \sum_{k=1}^\infty \alpha_k^\frac{\beta}{2} \left( \abs{y_k - e^{-\alpha_k}u_k} - \abs{y_k} - \abs{z_k - e^{-\alpha_k}u_k} + \abs{z_k} \right) \right| \\
		&\le \frac{\sqrt{2}}{b} \sum_{k=1}^N 2\alpha_k^\frac{\beta}{2} \abs{y_k - z_k} 
		+ \frac{\sqrt{2}}{b} \sum_{k=N+1}^\infty 2\alpha_k^\frac{\beta}{2} \abs{e^{-\alpha_k}u_k}
	\end{align*}
	for all $y,z \in X$ and $N \in \N$.
	As the series $\sum_{k=1}^\infty \alpha_k^\frac{\beta}{2} e^{-\alpha_k} \abs{u_k}$ converges, we can choose $N = N(u)$ such that
		\[ \frac{\sqrt{2}}{b} \sum_{k=N+1}^\infty 2\alpha_k^\frac{\beta}{2}e^{-\alpha_k}\abs{u_k} \le \frac{\varepsilon}{2}. \]
	Next, we choose
		\[ \delta \coloneqq \frac{b}{2\sqrt{2}}\left(\sum_{k=1}^N \alpha_k^{\beta}\right)^{-\frac12}\frac{\varepsilon}{2}. \]
	This way, we have
	\begin{equation*}
		\frac{\sqrt{2}}{b} \sum_{k=1}^N 2\alpha_k^\frac{\beta}{2} \abs{y_k - z_k}
		\le \frac{2\sqrt{2}}{b} \left(\sum_{k=1}^N \alpha_k^{\beta}\right)^\frac12 
		\left(\sum_{k=1}^N \abs{y_k - z_k}^2\right)^\frac12
		\le \frac{\varepsilon}{2\delta} \norm{y - z}_{X}
		\le \frac{\varepsilon}{2}
	\end{equation*}
	for all $y,z \in X$ with $\norm{y - z}_{X} \le \delta$, and consequently
		\[ \left|\Phi(u,y) - \Phi(u,z)\right| \le \frac{\varepsilon}{2} + \frac{\varepsilon}{2} = \varepsilon. \]
	The continuity of $\Phi$ now follows from the continuity in $u$ and $y$ and the triangle inequality.
\end{proof}

\begin{lemma}
	\label{lemExpNormInt}
	Let $\mu_0$ be a centered Gaussian measure on a separable Hilbert space $X$.
	Then for every $C > 0$, the function $u \mapsto \exp(C\norm{u}_X)$ defined on $X$ is $\mu_0$-integrable.
\end{lemma}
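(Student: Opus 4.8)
The plan is to obtain this as an immediate consequence of Fernique's theorem. First I would invoke Fernique's theorem \cite[Thm.~2.8.5]{Bogachev:1998}: since $X$ is a separable Hilbert space and $u \mapsto \norm{u}_X$ is a $\mu_0$-measurable seminorm, there exists $\beta > 0$ with
\[ \int_X \exp\bigl(\beta\norm{u}_X^2\bigr)\,\mu_0(\di u) < \infty. \]

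Next I would use that linear growth in the exponent is dominated by quadratic growth: completing the square, $\bigl(\sqrt{\beta}\,t - \tfrac{C}{2\sqrt{\beta}}\bigr)^2 \ge 0$ gives $Ct \le \beta t^2 + \tfrac{C^2}{4\beta}$ for every $t \ge 0$, so that with $t = \norm{u}_X$,
\[ \exp\bigl(C\norm{u}_X\bigr) \le e^{C^2/(4\beta)}\exp\bigl(\beta\norm{u}_X^2\bigr) \qquad \text{for all } u \in X. \]
Integrating this pointwise bound against $\mu_0$ and using the finiteness from the first step yields
\[ \int_X \exp\bigl(C\norm{u}_X\bigr)\,\mu_0(\di u) \le e^{C^2/(4\beta)}\int_X \exp\bigl(\beta\norm{u}_X^2\bigr)\,\mu_0(\di u) < \infty, \]
which is the assertion.

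There is essentially no obstacle here: the only substantive ingredient is Fernique's theorem, which I take as known, and the remaining steps are an elementary domination argument. A slightly longer alternative would start from the Gaussian tail bound $\mu_0(\{u \in X : \norm{u}_X > t\}) \le C_1 e^{-c t^2}$ (likewise a consequence of Fernique, or of Borell's inequality) and combine it with the layer-cake identity $\int_X \exp(C\norm{u}_X)\,\mu_0(\di u) = 1 + C\int_0^\infty e^{Ct}\,\mu_0(\{u \in X : \norm{u}_X > t\})\,\di t$, but the quadratic-exponential route above is the most economical.
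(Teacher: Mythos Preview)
Your proof is correct and follows essentially the same approach as the paper: both invoke Fernique's theorem to obtain $\int_X \exp(\beta\norm{u}_X^2)\,\mu_0(\di u) < \infty$ and then dominate the linear exponential by the quadratic one. The only cosmetic difference is that the paper splits the domain at radius $R = C/\beta$ (using $C\norm{u}_X \le C^2/\beta$ on $B_R(0)$ and $C\norm{u}_X \le \beta\norm{u}_X^2$ outside), whereas your completing-the-square inequality $Ct \le \beta t^2 + C^2/(4\beta)$ handles both regions at once and is slightly cleaner.
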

\begin{proof}
	By Fernique's theorem \cite[Thm~2.8.5]{Bogachev:1998}, there exists $\alpha > 0$ such that the integral $\int_X \exp(\alpha\norm{u}_X^2) \mu_0(\di u)$ is finite. Set $R \coloneqq \frac{C}{\alpha}$. Then the integral
	\begin{equation*}
		\int_X \exp\left(C\norm{u}_X\right) \mu_0(\di u)
		\le \int_{B_R(0)} \exp\left(\frac{C^2}{\alpha}\right) \mu_0(\di u)
		+ \int_{X \setminus B_R(0)} \exp\left(\alpha\norm{u}_X^2\right) \mu_0(\di u)
	\end{equation*}
	is finite as well.
\end{proof}

\begin{proof}[Proof of \cref{prop:Z_pos_fin}]
	We first show the integrability.
	Let $y \in X$ be arbitrary.
	We use the Lipschitz continuity of $\Phi(\cdot,y)$, which holds by Proposition \ref{prop:phi_cont}, to estimate
	\begin{equation*}
		\int_X \exp(-\Phi(u,y)) \Normal_{r^2 A^{-\tau}}(\di u)
		\le \exp(-\Phi(0,y)) \int_X \exp(L\norm{u}_X) \Normal_{r^2 A^{-\tau}}(\di u).
	\end{equation*}
	Now $\Phi(0,y) = 0$ for all $y \in X$ by definition of $\Phi$ and the integral on the right hand side is finite by Lemma \ref{lemExpNormInt}.

	Now, we address the lower bound.
	By the Lipschitz continuity of $\Phi$ in $u$, the estimate
	\begin{align*}
		\int_X \exp(-\Phi(u,y)) \Normal_{r^2 A^{-\tau}}(\di u)
		&\ge \int_X \exp(-L\norm{u}_X) \Normal_{r^2 A^{-\tau}}(\di u) \\
		&\ge \int_{B_1(0)} e^{-L} \Normal_{r^2 A^{-\tau}}(\di u)
		= e^{-L} \Normal_{r^2 A^{-\tau}}(B_1(0)) =: C_Z
	\end{align*}
	holds for all $y \in X$.
	By Theorem 3.6.1 in \cite{Bogachev:1998}, the topological support of the Gaussian measure $\Normal_{r^2 A^{-\tau}}$ is given by the closure of its Cameron--Martin space $\ran(A^{-\tau/2})$. In particular, all balls around $0$ have positive measure under $\Normal_{r^2 A^{-\tau}}$ because $0 \in \ran(A^{-\tau/2})$, which implies that the constant $C_Z$ is positive.
\end{proof}


\bibliography{references}

\end{document}